\newtheorem{thm}{Theorem}[section]
\newtheorem{cor}[thm]{Corollary}
\newtheorem{lem}[thm]{Lemma}
\newtheorem{prop}[thm]{Proposition}
\theoremstyle{definition}
\newtheorem{defn}[thm]{Definition}
\newtheorem{example}[thm]{Example}
\theoremstyle{remark}
\newtheorem{rem}[thm]{Remark}
\newcommand{\ra}{\rightarrow}
\newcommand{\iG}{\overleftarrow{G/\Gamma_n}}
\newcommand{\iH}{\overleftarrow{H/\Gamma_n}}
\newcommand{\iA}{\overleftarrow{\mathbb Z^2/A_n\mathbb Z^2}}
\newcommand{\N}{{\mathbb N}}
\newcommand{\Z}{{\mathbb Z}}
\newcommand{\ha}{{\hat a}}
\newcommand{\hb}{{\hat b}}
\newcommand{\hc}{{\hat c}}
\newcommand{\hd}{{\hat d}}
\newcommand{\mF}{{\mathfrak F}}
\renewcommand{\Im}{\text{Im}}
\begin{document}

\title{The structure and spectrum of Heisenberg odometers}

\author{Samuel Lightwood}
\address{Department of Mathematics, Western Connecticut State University, Danbury, CT 06810}
\email{lightwoods@wcsu.edu}
\author{Ay\c se \c Sah\. in}
\address{Department of Mathematical Sciences, DePaul University, Chicago, IL 60614}
\email{asahin@depaul.edu}
\author{Ilie Ugarcovici}
\address{Department of Mathematical Sciences, DePaul University, Chicago, IL 60614}
\email{iugarcov@depaul.edu}

\subjclass[2000]{Primary 37A15, 37A30; Secondary 20E34}
\keywords{odometer actions, subgroups of the Heisenberg group, discrete spectrum}

\date{July 30, 2012}

\begin{abstract}
Odometer actions of discrete, finitely generated and residually finite groups $G$ have been defined by Cortez and Petite.  In this paper we focus on the case where $G$ is the discrete Heisenberg group.  We prove a structure theorem for finite index subgroups of the Heisenberg group based on their geometry when they are considered as subsets of $\mathbb Z^3$.  We use this structure theorem to provide a classification of Heisenberg odometers and we construct examples of each class.  In order to construct some of the examples we also provide necessary and sufficient conditions for a $\mathbb Z^d$ odometer to be a product odometer as defined by Cortez.
It follows from work of Mackey that all such actions have discrete spectrum.
Here we provide a different proof of this fact for general $G$ odometers which allows us to identify explicitly those representations of the Heisenberg group which appear in the spectral decomposition of a given Heisenberg odometer.
 \end{abstract}

\maketitle

\section{Introduction}
In the classical theory, discrete-time measurable and topological dynamical systems are generated by the iteration of a single automorphism on a space, and are thus actions of the group $\mathbb Z$.  Since the 1970's an important direction of research has focused on actions of more general groups.  The work of Ornstein and Weiss \cite{OW} has established countable amenable groups as a general setting for ergodic theory.  The discrete Heisenberg group is the first natural example of a non-abelian group in this category and its actions are of interest in a variety of contexts within and without dynamical systems.  While the work in \cite{OW} establishes many tools of ergodic theory for the discrete Heisenberg group, actions of this group are not nearly as well understood as actions of $\mathbb Z^d$.  This is due, partly, to the non-commutative nature of the group but also to the more complicated geometry of orbits of the action.    Recent work has focused on algebraic actions of this group (e.g. \cite{ER},\cite{DK}).  

In this paper, we provide some geometric intuition for general measurable actions of this group by studying the concrete case of Heisenberg odometers.  Our main result gives a detailed geometric analysis of these actions.  We also provide an explicit description of the finite dimensional,  irreducible representations of the Heisenberg group which can arise in the spectral decomposition of a Heisenberg odometer.  Our work establishes explicitly, generalizing the classical theory, the connection between the algebraic structure of the odometer and the spectrum of the action.

\subsection*{An overview of odometer actions}
Odometer  systems are a well studied class of examples in the classical theory of measurable and topological dynamical systems generated by a single transformation.  They are rank one transformations, and therefore are ergodic and have zero entropy.  They have discrete rational spectrum and are the key ingredients in the study of Toeplitz systems.
They can be viewed  measure theoretically as cutting and stacking transformations of the unit interval.  Alternatively, they can be viewed algebraically as an action of $\mathbb Z$ on an inverse limit space of increasing quotient groups of $\mathbb Z$.  
They can also be viewed as an action by addition in an adic group.  It follows that they are, in fact uniquely ergodic  (see, for example, \cite{D05} and the references therein, and \cite{N98}).

All of these perspectives can be generalized to define odometer actions of $\mathbb Z^d$ and there is an obvious way to construct examples.  In the case of $d=2$, given any two $\mathbb Z$ odometer actions $(T,X)$ and $(S,Y)$,  the maps $T\times Id, Id\times S$ acting on $X\times Y$ clearly commute and satisfy the appropriate generalizations of the above ideas to $\mathbb Z^2$.
In \cite{Co06} Cortez defines odometer actions of $\mathbb Z^d$ using the inverse limit approach.  The author calls the obvious examples described above {\it product odometers} and gives an example of a non-product type $\mathbb Z^2$ odometer.  As in the classical case, $\mathbb Z^d$ odometers are uniquely ergodic and have zero entropy. In \cite{CP08} Cortez and Petite generalize the work in \cite{Co06} to define $G$ odometer dynamical systems for $G$ any discrete, finitely generated and residually finite group, and show that they are also uniquely ergodic.   In Section~\ref{sec:God} we provide this definition and introduce the notation to be used in the paper.

\subsection*{Heisenberg odometers}
Let $H$ be the discrete Heisenberg group,  defined on the set $\mathbb Z^3$ with the following  group multiplication:
\begin{equation*}
(x,y,z)(x',y',z')=(x+x',y+y',z+z'+xy').
\end{equation*}

In this paper we show that there are geometric considerations similar to the $\mathbb Z^2$ case which separate different types of Heisenberg odometers. 
 Informally, thinking of subgroups of the Heisenberg group as subsets of $\mathbb Z^3$ with a different group multiplication, 
we can associate to any Heisenberg odometer, a $\mathbb Z^2$ odometer constructed by considering the projections of the Heisenberg subgroups onto their first two coordinates.  If the associated $\mathbb Z^2$ odometer is of product type, as defined by Cortez, we call the Heisenberg odometer an
{\em $(x,y)$-product}  odometer.  
If the subgroups used to construct the odometer, considered as sets in $\mathbb Z^3$, have the structure $A\mathbb Z^2\times m\mathbb Z$ for some nonsingular matrix $A\in M(2,\mathbb Z)$ and $m\in\mathbb N$  we call it a {\em flat} odometer. 
If the Heisenberg odometer is both of $(x,y)$-product type and flat we call it a {\em pure product} odometer.    

As in the $\mathbb Z^d$ case, it is obvious how to construct pure product type odometers for the Heisenberg group.  We show by construction that  all the other types of odometers also exist.

In Section~\ref{sec:subgrp} we provide an analysis of the structure of the subgroups of the Heisenberg group necessary to classify odometer actions and in Section~\ref{sec:heisclassify} we complete the work outlined above.  Figure~\ref{fig:pic} provides a guide to the examples constructed in the paper.  In order to construct Heisenberg odometers that are not of product type in Section~\ref{sec:heisclassify} we  extend the work in \cite{Co06} by giving necessary and sufficient conditions for a $\mathbb Z^d$ odometer to be of product type.  This characterization allows us to identify a large collection of non-product examples. 
   
\begin{figure}[t]
\includegraphics[scale=0.7]{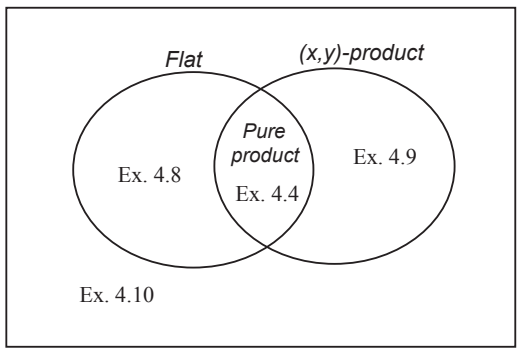}
\caption{Examples of all possible classes of Heisenberg odometers}
\label{fig:pic}
\end{figure}

\subsection*{Spectral analysis of $G$ odometers}
Let $G$ be as described above and recall that an ergodic and measure preserving action of $G$ is said to have discrete spectrum if the associated unitary representation can be decomposed into a direct sum of irreducible, finite dimensional representations of $G$.   In \cite{M64} Mackey shows that any action of this class of groups $G$ which is conjugate to a rotation on a compact group by a dense subgroup has discrete spectrum as a consequence of the Peter-Weyl Theorem.  A $G$ odometer is an example of this type of action.  Here we present a different argument where we explicitly construct the decomposition into irreducible finite dimensional representations of $G$. As an easy corollary, for those groups $G$ where entropy theory has been sufficiently developed, we have that $G$ odometers have zero entropy.

In the case of Heisenberg odometers, our analysis of the geometric structure of the  subgroups of $H$ allows us to give a complete description in Section~\ref{sec:spectral} of the finite dimensional, irreducible representations of $H$ that can occur in the spectral decomposition of any given Heisenberg odometer action.

\section{Defining $G$ odometers}\label{sec:God}
Let $G$ be a discrete, finitely generated and residually finite group.  Following \cite{CP08} we define a $G$ odometer dynamical system as follows.  Since $G$ is residually finite, there exists a sequence 
$\Gamma_1\supset \Gamma_2\supset\cdots \Gamma_n\supset\cdots$
of subgroups with finite indices in $G$ such that $\cap \Gamma_n = \{e\}$. 
Let  
$\pi_n\colon G/\Gamma_{n+1}\rightarrow G/\Gamma_n$ 
be the map induced by the inclusion $\Gamma_{n+1}\subset \Gamma_n$ and denote by $\iG$ the inverse limit space of the sequence $\{(G/\Gamma_n,\pi_n)\}_{n\geq 1}$. It is a compact metrizable space  whose topology is spanned by the cylinder sets
\[
[n;\gamma]=\{\mathbf{g}\in \iG : g_n=\gamma\} \text{ with } \gamma\in G/\Gamma_n.
\]

The group $G$ acts by left multiplication on $\iG$.  If the subgroups $\{\Gamma_n\}$ are normal in $G$, the system is called a $G$ odometer and otherwise a subodometer.  The odometer action of $G$ on $\iG$ preserves Haar measure $\mu$ and is uniquely ergodic.  We refer to the dynamical system $(\iG,\mu,G)$ as the $G$ odometer on $\iG$.

There exists an effective criterion to check whether two $G$ odometers are conjugate which we will use extensively. It is based on the following characterization of a factor map:
\begin{lem}[\cite{CP08}]\label{lem:factor}  There exists a factor map $\pi:
\overleftarrow{G/\Gamma^1_n} \rightarrow \overleftarrow{G/\Gamma^2_n}$ between two $G$ odometers if and only if for every $\Gamma^2_n$ there exists $\Gamma^1_{k}$ such that  $\Gamma^1_{k}\subset \Gamma^2_{n}$. 
\end{lem}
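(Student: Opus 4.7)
The plan is to prove the two directions separately, using continuity and $G$-equivariance of the factor map, and exploiting the normality of the subgroups $\Gamma^2_n$ (the distinguishing feature of a $G$ odometer as opposed to a subodometer). For the \emph{if} direction, assume that for every $n$ there is $k(n)$ with $\Gamma^1_{k(n)} \subset \Gamma^2_n$; without loss of generality take $k(n)$ strictly increasing. The inclusions yield well-defined, $G$-equivariant surjections $p_n \colon G/\Gamma^1_{k(n)} \to G/\Gamma^2_n$ via $g\Gamma^1_{k(n)} \mapsto g\Gamma^2_n$. I would check that the $p_n$ commute with the bonding maps of the two inverse systems (a short diagram chase, since both compositions amount to reducing $g$ modulo $\Gamma^2_m$), so by the universal property they induce a continuous $G$-equivariant map $\pi$ between the inverse limits; passing to the cofinal subsequence $\{k(n)\}$ does not alter $\overleftarrow{G/\Gamma^1_n}$. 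Surjectivity of $\pi$ follows from surjectivity of each $p_n$ together with compactness.

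For the \emph{only if} direction, let $\pi$ be a factor map, set $\mathbf{e} = (e\Gamma^1_m)_m$, and write $\mathbf{y}_0 = \pi(\mathbf{e}) = (y_m\Gamma^2_m)_m$. Fix $n$. The cylinder $[n; y_n\Gamma^2_n]$ is an open neighborhood of $\mathbf{y}_0$, so by continuity of $\pi$ at $\mathbf{e}$ together with the fact that cylinders form a topological basis, there exists $k$ with $\pi([k; e\Gamma^1_k]) \subset [n; y_n\Gamma^2_n]$. For any $g \in \Gamma^1_k$ one has $g\Gamma^1_k = \Gamma^1_k$, hence $g \cdot \mathbf{e} \in [k; e\Gamma^1_k]$; $G$-equivariance of $\pi$ then gives $g \cdot \mathbf{y}_0 \in [n; y_n\Gamma^2_n]$. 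Reading off the $n$-th coordinate yields $g y_n \in y_n \Gamma^2_n$, i.e.\ $g \in y_n \Gamma^2_n y_n^{-1}$, which by normality is simply $\Gamma^2_n$. Thus $\Gamma^1_k \subset \Gamma^2_n$.

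The real point, rather than a technical obstacle, is recognizing where normality enters: without it, one would only conclude $\Gamma^1_k \subset y_n \Gamma^2_n y_n^{-1}$, and since $\mathbf{y}_0$ need not come from a single element $y \in G$ there is no way to absorb the conjugation by rechoosing a basepoint. This is precisely why the statement is phrased for $G$ odometers rather than subodometers, and it is the only step in the argument that uses more than the abstract inverse limit formalism together with the $G$-action by left multiplication.
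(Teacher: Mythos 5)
Your proof is correct. Note that the paper itself gives no argument for this lemma at all---it is quoted verbatim from Cortez and Petite \cite{CP08}---so there is no internal proof to compare against; your write-up supplies the standard argument that the cited reference contains. Both directions are sound: the \emph{if} direction correctly builds compatible equivariant surjections $p_n$ on a cofinal subsequence and invokes compactness for surjectivity of the induced map on inverse limits, and the \emph{only if} direction correctly uses that the cylinders $[k;e\Gamma^1_k]$ form a neighborhood basis at the identity point together with equivariance. Your closing observation is also the right one to make: normality of $\Gamma^2_n$ is exactly what lets you replace $y_n\Gamma^2_n y_n^{-1}$ by $\Gamma^2_n$, and this is why the criterion is stated for odometers rather than subodometers.
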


\section{Subgroups of the discrete Heisenberg group}\label{sec:subgrp}
In this section we describe the geometry of subgroups of $H$, the discrete Heisenberg group, in terms of their projection onto their first two coordinates, and the structure of the fiber over this projection.  The work in this section will allow us to carry out the classification of Heisenberg odometers described in the introduction.  

The following facts are easily verified by computation and we will use them frequently in our arguments:
\begin{equation}\label{conj}
\begin{split}
&(x,y,z)^{-1}=(-x,-y,-z+xy)\\
&(u,v,w)(x,y,z)(u,v,w)^{-1}=(x,y,z+uy-vx)\\
&(u,v,w)(x,y,z)(u,v,w)^{-1}(x,y,z)^{-1}=(0,0,uy-vx).
\end{split}
\end{equation}

Recall also that there exist group homomorphisms $f:\mathbb Z\rightarrow H$, $z\mapsto (0,0,z)$, and $g:H\rightarrow\mathbb Z^2$, $(x,y,z)\mapsto (x,y)$, so that the following is a short exact sequence:
\begin{equation}
0\rightarrow\mathbb Z=<(0,0,1)>\overset{f}{\rightarrow}H\overset{g}{\rightarrow}\mathbb Z^2\rightarrow 0.\label{eq:short}
\end{equation}

\begin{prop}\label{prop:struc}  Let $\Gamma$ be a finite index subgroup of $H$ and let 
$f, g$ be as in \eqref{eq:short}.   Let   $m_\Gamma=[\Im f : \Gamma\cap \Im f]$, and $A\in M(2,\mathbb Z)$ be such that $A\mathbb Z^2= g(\Gamma)$.  Then $A$ is nonsingular and there exists
a map $i_\Gamma:A\mathbb Z^2\rightarrow\mathbb Z$ defined by
\begin{equation*} 
i_\Gamma(x,y)=\min\{z\ge0:(x,y,z)\in\Gamma\}
\end{equation*}
such that 
\begin{equation}\label{struct}
\Gamma=\{(x,y,i_\Gamma(x,y)+km_\Gamma): (x,y)\in A\mathbb Z^2, k\in\mathbb Z\}.
\end{equation}
\end{prop}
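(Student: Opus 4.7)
The plan is to exploit the short exact sequence \eqref{eq:short}: the subgroup $\Gamma$ contributes a horizontal part (its projection $g(\Gamma)$) and a vertical part (its intersection with $\Im f$), and I will show that these two pieces together with a single fiber-lift function $i_\Gamma$ reconstruct $\Gamma$. Restricting $g$ to $\Gamma$ produces the short exact sequence $0 \to \Gamma \cap \Im f \to \Gamma \to g(\Gamma) \to 0$, whence $[H:\Gamma] = [\mathbb Z^2 : g(\Gamma)]\cdot[\Im f : \Gamma \cap \Im f]$. Since $[H:\Gamma]$ is finite, both factors on the right are finite: the first forces $A\mathbb Z^2$ to be of finite index in $\mathbb Z^2$, hence $\det A \neq 0$ and $A$ is nonsingular; the second says $\Gamma \cap \Im f$ is a nontrivial cyclic subgroup of $\Im f \cong \mathbb Z$, necessarily of the form $\{(0,0,km_\Gamma) : k \in \mathbb Z\}$ for a unique positive integer $m_\Gamma$.

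Next I verify that $i_\Gamma$ is well defined. For $(x,y) \in g(\Gamma) = A\mathbb Z^2$, the set $F_{(x,y)} = \{z \in \mathbb Z : (x,y,z) \in \Gamma\}$ is nonempty by definition of $g(\Gamma)$. If $z_1, z_2 \in F_{(x,y)}$, then applying the inverse and product formulas of \eqref{conj} yields $(x,y,z_1)(x,y,z_2)^{-1} = (0,0,z_1 - z_2)$, which must lie in $\Gamma \cap \Im f = m_\Gamma \mathbb Z$. Hence $F_{(x,y)}$ is a single coset of $m_\Gamma \mathbb Z$ in $\mathbb Z$ and possesses a unique minimal nonnegative element, namely $i_\Gamma(x,y)$; in particular $F_{(x,y)} = i_\Gamma(x,y) + m_\Gamma \mathbb Z$.

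The equality \eqref{struct} then follows in both directions. The inclusion $\supseteq$ is by closure: $(x,y,i_\Gamma(x,y))$ and $(0,0,km_\Gamma)$ both lie in $\Gamma$, and their product under the Heisenberg law equals $(x,y, i_\Gamma(x,y) + km_\Gamma)$. The inclusion $\subseteq$ is immediate from the fiber description just established. The only genuinely nontrivial step, and the one point at which the non-commutativity of $H$ actually enters the argument rather than formal bookkeeping around an extension, is the difference-of-lifts calculation that collapses $F_{(x,y)}$ to a single coset of $m_\Gamma\mathbb Z$; everything else reduces to standard facts about short exact sequences of abelian groups.
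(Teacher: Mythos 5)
Your proof is correct and follows essentially the same route as the paper: the key step in both is the computation $(x,y,z_1)(x,y,z_2)^{-1}=(0,0,z_1-z_2)\in\Gamma\cap\Im f=\{0\}\times\{0\}\times m_\Gamma\mathbb Z$, which shows each fiber over $(x,y)\in A\mathbb Z^2$ is a single coset of $m_\Gamma\mathbb Z$, together with the closure argument for the reverse inclusion. Your use of the multiplicativity of the index along the short exact sequence to justify that $A$ is nonsingular and $m_\Gamma$ is finite is just a more explicit version of what the paper asserts directly from finiteness of $[H:\Gamma]$.
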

 
\begin{proof} 
Since $\Gamma$ is a finite index subgroup of $H$ we have $m_\Gamma<\infty$ and
\begin{equation}\label{eq:mgamma}
\Gamma\cap \Im f=\{0\}\times\{0\}\times m_\Gamma\mathbb Z.
\end{equation} 
Also $g(\Gamma)=A\mathbb Z^2$ is a finite index subgroup of $\mathbb \Z^2$, hence $A$ is a nonsingular matrix.

Since $(0,0,km_\Gamma)$ and $(x,y,i_\Gamma(x,y))$ are both in $\Gamma$, it follows immediately from the group operation that $(x,y,i_\Gamma(x,y)+km_\Gamma)\in \Gamma$. Now let $(x,y,z)\in \Gamma$. Then
$$
(x,y,i_\Gamma(x,y))(x,y,z)^{-1}=(0,0,i_\Gamma(x,y)-z)\in \Gamma\cap \Im f.
$$
This implies that $z\in i_\Gamma(x,y)+m_\Gamma \mathbb Z$, concluding the proof.
\end{proof}

The next proposition identifies some algebraic properties of the function $i_\Gamma$, and the relationship between the constant $m_\Gamma$ and the matrix $A$.

\begin{prop}\label{prop:normal}
Let $\Gamma$ be a finite index subgroup of $H$, and $A, i_\Gamma$, $m_\Gamma$ be as in Proposition~\ref{prop:struc}.  Then
\begin{equation}\label{iprop}
i_\Gamma(x,y)+i_\Gamma(x',y')+xy'=i_\Gamma(x+x',y+y')\mod m_\Gamma
\end{equation}
and
\begin{equation}\label{ndivdet}
m_\Gamma\vert det(A).
\end{equation}
If, in addition, $\Gamma$ is a normal subgroup then we have the stronger conclusion that $m_\Gamma$ divides all the entries in the matrix $A$.
 \end{prop}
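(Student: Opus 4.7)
The plan is to deduce all three claims directly from the structure theorem of Proposition~\ref{prop:struc} together with the Heisenberg multiplication and conjugation formulas in~\eqref{conj}.

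For \eqref{iprop}, I would start from the two canonical representatives $(x,y,i_\Gamma(x,y))$ and $(x',y',i_\Gamma(x',y'))$, which both lie in $\Gamma$ by definition of $i_\Gamma$. Multiplying in $H$ gives
\[
(x+x',\,y+y',\,i_\Gamma(x,y)+i_\Gamma(x',y')+xy')\in\Gamma,
\]
and since its $(x,y)$-projection is $(x+x',y+y')\in A\mathbb Z^2$, Proposition~\ref{prop:struc} forces the third coordinate to lie in $i_\Gamma(x+x',y+y')+m_\Gamma\mathbb Z$. This is exactly \eqref{iprop}.

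For \eqref{ndivdet}, the key observation is to apply \eqref{iprop} twice, swapping the order of the arguments:
\begin{align*}
i_\Gamma(x,y)+i_\Gamma(x',y')+xy' &\equiv i_\Gamma(x+x',y+y')\pmod{m_\Gamma},\\
i_\Gamma(x',y')+i_\Gamma(x,y)+x'y &\equiv i_\Gamma(x+x',y+y')\pmod{m_\Gamma}.
\end{align*}
Subtracting yields $xy'-x'y\equiv 0\pmod{m_\Gamma}$ for every pair of vectors in $A\mathbb Z^2$. Taking $(x,y)$ and $(x',y')$ to be the two columns of $A$ gives $\det(A)\equiv 0\pmod{m_\Gamma}$.

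For the final assertion, suppose $\Gamma$ is normal. Then for every $(u,v,w)\in H$ and every $(x,y,z)\in\Gamma$, the second identity in \eqref{conj} gives $(x,y,z+uy-vx)\in\Gamma$. Since $(x,y,z)$ is already in $\Gamma$, Proposition~\ref{prop:struc} forces $uy-vx\equiv 0\pmod{m_\Gamma}$ for every $(u,v)\in\mathbb Z^2$ and every $(x,y)\in A\mathbb Z^2$. Specializing to $(u,v)=(1,0)$ and $(u,v)=(0,1)$ shows that each coordinate of every element of $A\mathbb Z^2$ is divisible by $m_\Gamma$; applying this to the two columns of $A$ gives the claim. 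The whole argument is essentially mechanical; the only non-routine step is the swap trick that produces \eqref{ndivdet} from \eqref{iprop}, so that is where I would focus the write-up.
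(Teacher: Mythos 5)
Your proposal is correct and takes essentially the same route as the paper: identity \eqref{iprop} is read off from the product of the canonical representatives exactly as in the paper, and the normality claim is handled identically via the conjugation formula in \eqref{conj} specialized to $(u,v)=(1,0)$ and $(0,1)$. The only cosmetic difference is in \eqref{ndivdet}, where the paper applies the commutator identity $(u,v,w)(x,y,z)(u,v,w)^{-1}(x,y,z)^{-1}=(0,0,uy-vx)$ to two elements of $\Gamma$ whose projections are the columns of $A$ times an auxiliary matrix of determinant coprime to $m_\Gamma$, whereas you antisymmetrize \eqref{iprop}; these are the same computation (your subtraction of the two congruences is precisely the third coordinate of that commutator modulo $m_\Gamma$), and your direct specialization to the columns of $A$ is, if anything, slightly cleaner.
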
 

\begin{proof}

Let $(x,y,i_\Gamma(x,y)+km_\Gamma),(x',y',i_\Gamma(x',y')+k'm_\Gamma)\in\Gamma$.  Proposition~\ref{prop:struc} yields that 
\begin{equation*}
i_\Gamma(x,y)+i_\Gamma(x',y')+(k+k')m_\Gamma+xy'=i(x+x',y+y')+k''m_\Gamma
\end{equation*}
for some $k''\in\mathbb Z$, and \eqref{iprop} follows.
To see that (\ref{ndivdet}) holds choose $p,q,r,s\in\mathbb Z$ such that $m_\Gamma$ is relatively prime to $ps-rq$. Let 
$\begin{pmatrix}
u & x\\v & y
\end{pmatrix}=A\begin{pmatrix}
p & r \\q & s
\end{pmatrix}$.
Then both $(u,v),(x,y)\in A\mathbb Z^2$ and therefore there exist $w,z\in\mathbb Z$ such that $(u,v,w)$, $(x,y,z)\in\Gamma$.  Using \eqref{conj} we then have that  $(0,0,uy-vx)\in\Gamma\cap\Im f$ and \eqref{eq:mgamma} implies that $m_\Gamma$ must divide $uy-vx$.  
But
\[
uy-vx=\det \begin{pmatrix}
u & x\\v & y
\end{pmatrix}=\det(A)\det\begin{pmatrix}
p & r \\q & s
\end{pmatrix}
\]
so by our choice of $p,q,r,s$ we must have that $m_\Gamma$ divides $det(A)$.  

Finally, suppose that $\Gamma$ is normal, and choose any $(x,y,z)\in\Gamma$ and $(u,v,w)\in H$.  Normality, the definition of $m_\Gamma$, and  (\ref{conj}) yield that $m_\Gamma | (uy-vx)$ for any $(u,v)\in \Z^2$. This implies that  $m_\Gamma | x,y$. Since $(x,y)\in A\Z^2$ is arbitrary, it follows that $m_\Gamma$ must divide each entry of the matrix $A$.  
\end{proof}

The following proposition gives a converse result to the previous two propositions.

\begin{prop}\label{prop:normalconv}
Consider a nonsingular matrix $A\in M(2,\Z)$, an integer $m\ge1$ that divides the entries of $A$, and a map $i:A\mathbb Z^2\rightarrow \Z$ that satisfies $i(0,0)=0$ and
\begin{eqnarray}
i(x,y)+i(x',y')=i(x+x',y+y')\mod m.\label{ihomom}
\end{eqnarray}
Then
$\Gamma_{A,i,m}=\{(x,y,km+i(x,y)):(x,y)\in A\mathbb Z^2, k\in\mathbb Z\}$
is a finite index normal subgroup of $H$.
\end{prop}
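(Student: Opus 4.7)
The plan is to check, in turn, the four things the statement demands: that $\Gamma = \Gamma_{A,i,m}$ contains the identity, is closed under multiplication and inversion, has finite index, and is normal. The key observation that ties everything together is that the hypothesis ``$m$ divides every entry of $A$'' means that if $(x,y), (x',y') \in A\Z^2$ then $m\mid x$, $m\mid y$, $m\mid x'$, $m\mid y'$, and in particular $m\mid xy'$ and $m\mid xy$. This is what lets us match the Heisenberg twist against (\ref{ihomom}).

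First I would record that $(0,0,0) \in \Gamma$ because $i(0,0)=0$ and $(0,0) \in A\Z^2$. For closure under multiplication, I would take $(x,y,km+i(x,y))$ and $(x',y',k'm+i(x',y'))$ in $\Gamma$ and multiply, obtaining third coordinate $(k+k')m + i(x,y)+i(x',y') + xy'$. Since $m \mid xy'$ by the key observation, and $i(x,y)+i(x',y') \equiv i(x+x',y+y') \pmod m$ by (\ref{ihomom}), this third coordinate has the form $k''m + i(x+x',y+y')$, as needed. For inverses, apply the first line of (\ref{conj}) to get $(-x,-y,-km-i(x,y)+xy)$; since $m\mid xy$, and since (\ref{ihomom}) applied to $(x,y)$ and $(-x,-y)$ forces $i(-x,-y) \equiv -i(x,y) \pmod m$, this again has the right form.

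For finite index I would use the short exact sequence (\ref{eq:short}): the projection $g$ sends $\Gamma$ onto $A\Z^2$, a subgroup of $\Z^2$ of index $|\det A|$. By construction $\Gamma \cap \Im f = \{(0,0,km) : k\in\Z\}$ (an element $(0,0,z) \in \Gamma$ forces $z = km + i(0,0) = km$), so the index of $\Gamma \cap \Im f$ in $\Im f$ is $m$. Combining, $[H:\Gamma] = m \cdot |\det A| < \infty$. For normality, I would conjugate a generic $\gamma = (x,y,z) \in \Gamma$ by an arbitrary $(u,v,w) \in H$; the second line of (\ref{conj}) gives $(x,y,z+uy-vx)$, and since $m\mid x$ and $m\mid y$ we have $uy - vx \in m\Z$, so the conjugate lies in $\Gamma$.

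The routine but load-bearing point is the repeated use of the divisibility hypothesis on the entries of $A$: it makes the Heisenberg cocycle $(x,y')\mapsto xy'$ vanish mod $m$ on $A\Z^2 \times A\Z^2$, so the mod-$m$ homomorphism condition (\ref{ihomom}) is strong enough to give closure, and it also kills the commutator defect in (\ref{conj}) and hence gives normality. There is no real obstacle once one notices that this single divisibility fact handles both of these potentially problematic terms simultaneously.
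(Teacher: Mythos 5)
Your proposal is correct and follows essentially the same route as the paper's proof: verify the subgroup axioms by direct computation with \eqref{conj}, using the single fact that $m$ divides the entries of $A$ (hence $x$, $y$, and the twist $xy'$) to reduce everything to the homomorphism condition \eqref{ihomom}. You simply write out the closure, normality, and finite-index steps that the paper dispatches with ``similar arguments'' and with the observation that the choice of $A$ and $m$ forces finite index.
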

\begin{proof}
If $\Gamma_{A,i,m}$ is a subgroup, our choice of $A$ and $m$ will guarantee that it is a subgroup of finite index.  To see that it is a subgroup
first note that 
since $i(0,0)=0$ we have $(0,0,0)\in\Gamma_{A,i,m}$.  
Choose $\gamma=(x,y,i(x,y)+km)$ for some $k\in\mathbb Z$.  
To see that $\gamma^{-1}$ is also in $\Gamma$ we use (\ref{conj}) to obtain
$
\gamma^{-1}=(-x,-y,-i(x,y)-km+xy).
$
Using (\ref{ihomom}) we can replace $-i(x,y)$ with $i(-x,-y)+k'm$ for some $k'\in\mathbb Z$. Since $m$ divides the entries of $A$, it must divide $x$ and $y$ yielding 
that $\gamma^{-1}$ lies in $\Gamma$.

Closure under addition and normality follow from similar arguments.
\end{proof}
 
\begin{rem}
Note that in Proposition~\ref{prop:struc} if $\Gamma$ is a normal, finite index subgroup then since  $m_\Gamma$ divides the entries of $A$ in \eqref{iprop}  we have $xy'= 0\mod m_\Gamma$.  It is easy to see that the map $i$ defined in \eqref{ihomom} satisfies $i=i_{\Gamma_{A,i,m}}\mod m$ where $i_{\Gamma_{A,i,m}}$ is as defined in Proposition~\ref{prop:struc}.
\end{rem}

The next result will allow us to easily check  if a sequence of normal subgroups of $H$ is nested. 
\begin{prop}\label{prop:odometer}
Suppose $\Gamma$ and $\Gamma'$ are finite index normal subgroups of $H$ defined by the triples $(A_\Gamma\mathbb Z^2,m_{\Gamma},i_{\Gamma})$ and $(A_{\Gamma'}\mathbb Z^2,m_{\Gamma'},i_{\Gamma'})$, respectively. Then $\Gamma'\le \Gamma$ if and only if $A_{\Gamma'}\mathbb Z^2\subset A_\Gamma\mathbb Z^2$, $m_\Gamma | m_{\Gamma'}$ and 
\begin{equation}\label{eq:cond}
i_{\Gamma'}(x,y)=i_\Gamma(x,y) \mod m_\Gamma \text{ for all } (x,y)\in A_{\Gamma'}\Z^2.
\end{equation}
\end{prop}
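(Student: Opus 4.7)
The plan is to unpack both subgroups using the explicit parametrization from Proposition~\ref{prop:struc}, so that $\Gamma = \{(x,y,i_\Gamma(x,y)+k m_\Gamma): (x,y)\in A_\Gamma\mathbb Z^2,\,k\in\mathbb Z\}$ and similarly for $\Gamma'$. Containment of subgroups then becomes a containment of these parametrized sets, and each of the three conditions on the right-hand side of the statement corresponds to one feature of that containment (the base lattice, the fiber lattice, and the section map).

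For the forward implication, I would assume $\Gamma'\le\Gamma$ and read off each of the three conditions in turn. Applying the projection $g$ from \eqref{eq:short} immediately gives $A_{\Gamma'}\mathbb Z^2 = g(\Gamma')\subset g(\Gamma) = A_\Gamma\mathbb Z^2$. Intersecting with $\Im f$ and using \eqref{eq:mgamma} for both subgroups yields $m_{\Gamma'}\mathbb Z\subset m_\Gamma\mathbb Z$, equivalently $m_\Gamma\mid m_{\Gamma'}$. Finally, for any $(x,y)\in A_{\Gamma'}\mathbb Z^2$ the element $(x,y,i_{\Gamma'}(x,y))\in\Gamma'\subset\Gamma$, so Proposition~\ref{prop:struc} applied to $\Gamma$ forces $i_{\Gamma'}(x,y)\equiv i_\Gamma(x,y)\mod m_\Gamma$, which is \eqref{eq:cond}.

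For the converse, I would take an arbitrary element $\gamma'=(x,y,i_{\Gamma'}(x,y)+k m_{\Gamma'})\in\Gamma'$ and verify directly that it satisfies the parametrization of $\Gamma$. The inclusion $A_{\Gamma'}\mathbb Z^2\subset A_\Gamma\mathbb Z^2$ puts $(x,y)$ in the base lattice of $\Gamma$. Condition \eqref{eq:cond} writes $i_{\Gamma'}(x,y) = i_\Gamma(x,y) + \ell m_\Gamma$ for some $\ell\in\mathbb Z$, and $m_\Gamma\mid m_{\Gamma'}$ writes $k m_{\Gamma'}$ as a multiple of $m_\Gamma$; combining gives $i_{\Gamma'}(x,y)+k m_{\Gamma'} = i_\Gamma(x,y)+k'' m_\Gamma$ for some $k''\in\mathbb Z$, so $\gamma'\in\Gamma$ by Proposition~\ref{prop:struc}.

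There is no substantive obstacle here: once Proposition~\ref{prop:struc} is in hand, the statement is essentially a bookkeeping exercise showing that the three conditions jointly encode inclusion of the two coset decompositions. The only point that requires a moment of care is ensuring that the congruence $\eqref{eq:cond}$ is only needed on $A_{\Gamma'}\mathbb Z^2$ (not on the larger $A_\Gamma\mathbb Z^2$), which is exactly what the forward direction produces and what the reverse direction uses.
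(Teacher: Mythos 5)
Your proposal is correct and matches the paper's approach: the paper's proof is a one-line remark that the result "follows immediately when the subgroups are written in the form given by \eqref{struct} of Proposition~\ref{prop:struc}," and your argument is exactly that bookkeeping carried out explicitly in both directions. No gaps.
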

\begin{proof}
The proof follows immediately when  the subgroups $\Gamma$ and $\Gamma'$ are written in the form given by \eqref{struct} of Proposition~\ref{prop:struc}.
\end{proof}

\section{The Structure of Heisenberg Odometer Actions}\label{sec:heisclassify}
In this section we show that it is possible to describe the structure of Heisenberg odometers in terms of the geometric structure of its defining subgroups.  Using the results of the previous section we can now describe any sequence of subgroups $\Gamma_n$ of $H$ in terms of the  sequence of triples
$(A_n\mathbb Z^2,m_{\Gamma_n},i_{\Gamma_n})$.   
 
 \begin{defn} A finite index subgroup $\Gamma$ of $H$ is called:
\begin{itemize}
\item \emph{flat} if $i_\Gamma\equiv 0$;
\item \emph{$(x,y)$-product} if there exists a diagonal nonsingular matrix $A\in M(2,\Z)$ such that $\Im g_\Gamma=A\Z^2$;
\item \emph{pure product} if it is flat and $(x,y)$-product.
\end{itemize}
\end{defn}

Note that for a given subgroup $\Gamma<H$, the properties of the map $i_\Gamma$ are defined independent of whether $A_\Gamma\mathbb Z^2$ is of product type or not.  Based on these two characteristics, we introduce the following conjugacy invariants for  $H$ odometers.
\begin{defn}
An $H$ odometer on $\overleftarrow{H/\Gamma_n}$ is called a \emph{flat}  
(\emph{$(x,y)$-product, pure product})
 odometer  if  it is conjugate to an $H$ odometer on $\overleftarrow{H/\Gamma'_n}$ where each normal subgroup $\Gamma'_n$ is flat 
 ($(x,y)$-product, pure product).  
 \end{defn}

The remainder of the section is devoted to justifying these classifications. 
Note that if the sequence $\{\Gamma_n\}$ defines a Heisenberg odometer, then the sequence $\{A_n\mathbb Z^2\}$ must also define a $\mathbb Z^2$ odometer on $\overleftarrow{\mathbb Z^2/A_n\mathbb Z^2}$ called the {\em associated $\mathbb Z^2$ odometer}.  

\begin{thm}\label{thm:main}
A Heisenberg odometer must be of one of the following types:  pure product, $(x,y)$-product and not flat, flat and not $(x,y)$-product, and neither flat nor $(x,y)$-product.  Furthermore there exist Heisenberg odometers of each type.
\end{thm}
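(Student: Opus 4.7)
The four-way classification is tautological: flatness and the $(x,y)$-product property are two independent binary conjugacy invariants by definition, so every Heisenberg odometer falls into exactly one of the four boxes. The substantive content is existence, which for each box follows the same plan: write down a sequence of triples $(A_n\mathbb Z^2,m_n,i_n)$, apply Proposition~\ref{prop:normalconv} to obtain finite index normal subgroups $\Gamma_n<H$, use Proposition~\ref{prop:odometer} to verify nesting, and check $\bigcap_n\Gamma_n=\{e\}$. Positive assertions (flat, $(x,y)$-product) follow by inspection of the defining $\Gamma_n$; negative assertions (not flat, not $(x,y)$-product) require ruling out the existence of \emph{any} cofinal alternative sequence with the forbidden form, using Lemma~\ref{lem:factor} together with Proposition~\ref{prop:struc} to translate the two-sided inclusions $\Gamma_k\subset\Gamma'_n$ and $\Gamma'_k\subset\Gamma_n$ into arithmetic constraints on the alternative triples.

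The pure product example is the obvious $A_n=2^n I$, $m_n=2^n$, $i_n\equiv 0$. For the $(x,y)$-product but not flat class, the plan is to take $A_n=m_n I$ with $m_n=2\cdot 3^{n-1}$ and $i_n(m_n a,m_n b)=3^{n-1}a$; the matrix is diagonal, so each $\Gamma_n$ is $(x,y)$-product by construction, while the remaining conditions of Propositions~\ref{prop:normalconv} and~\ref{prop:odometer} reduce to elementary divisibility checks. To show the resulting odometer is not flat, assume a cofinal flat sequence $\Gamma'_n=B_n\mathbb Z^2\times M_n\mathbb Z$ exists. The inclusion $\Gamma_k\subset\Gamma'_n$ applied to $(m_k,0,3^{k-1})\in\Gamma_k$ forces $M_n\mid 3^{k-1}$, so every $M_n$ must be a power of $3$. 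On the other hand, the inclusion $\Gamma'_k\subset\Gamma_n$ applied to $(0,0,M_k t)\in\Gamma'_k$ for all $t\in\mathbb Z$ forces $m_n\mid M_k$, hence $2\mid M_k$, contradicting the power-of-$3$ conclusion.

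For the flat but not $(x,y)$-product class, the plan is to start with a non-product-type $\mathbb Z^2$ odometer $\{A_n\mathbb Z^2\}$ (existence from \cite{Co06}, or from the general product criterion developed later in this section), rescaled if necessary so that a sequence $m_n\to\infty$ divides every entry of $A_n$, and set $\Gamma_n=A_n\mathbb Z^2\times m_n\mathbb Z$. Each $\Gamma_n$ is then flat by construction, and since the associated $\mathbb Z^2$ odometer is a conjugacy invariant of the Heisenberg odometer, non-product-type at the $\mathbb Z^2$ level rules out $(x,y)$-product at the $H$ level. The remaining class is handled by combining both ingredients: use a non-product-type $A_n$ equipped with a cocycle $i_n$ employing the same $2$-versus-$3$ arithmetic split as in the $(x,y)$-product-but-not-flat construction.

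The principal obstacle is the non-flatness argument, since flatness is defined via the existence of \emph{some} cofinal flat sequence rather than a canonical one. Overcoming it requires extracting divisibility constraints on the hypothetical $\{(B_n,M_n)\}$ from both directions of Lemma~\ref{lem:factor} simultaneously and forcing them into contradiction, which is what motivates choosing $m_n$ and the values of $i_n$ so that their respective prime supports are multiplicatively disjoint.
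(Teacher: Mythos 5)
The classification half of the theorem is not tautological, and dismissing it as such is the genuine gap in your proposal: it is the only part of the statement to which the paper devotes its displayed proof. All three properties are defined \emph{existentially} over conjugacy: an odometer is flat if it is conjugate to an odometer built from some sequence of flat subgroups, $(x,y)$-product if it is conjugate to one built from some (possibly entirely different) sequence of $(x,y)$-product subgroups, and pure product if it is conjugate to one built from a \emph{single} sequence of subgroups that are simultaneously flat and $(x,y)$-product. An odometer that is both flat and $(x,y)$-product a priori witnesses those two properties through two different conjugate sequences, so exhaustiveness of your four boxes requires merging the witnesses, i.e.\ proving that flat together with $(x,y)$-product implies pure product. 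The paper does this by passing to the flat representative $(A'_n,m'_n,0)$, using Remark~\ref{rem:xyprod} to conclude that its associated $\Z^2$ odometer is of product type, extracting interleaved diagonal lattices $A'_n\Z^2\supset\Delta'_n\Z^2\supset A'_{n+1}\Z^2$ as in \eqref{eq:incl}, and observing via Lemma~\ref{lem:factor} that the pure product odometer defined by $(\Delta'_n,m'_n,0)$ satisfies $\Gamma'_n\supset\tilde\Gamma'_n\supset\Gamma'_{n+1}$ and is therefore conjugate to the original. Your proposal contains no substitute for this step; calling the two invariants ``independent binary'' properties simply asserts the conclusion.

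The existence constructions are essentially sound and parallel the paper's. Your $(x,y)$-product-but-not-flat example ($A_n=m_nI$, $m_n=2\cdot3^{n-1}$, $i_n(m_na,m_nb)=3^{n-1}a$) does satisfy Propositions~\ref{prop:normalconv} and~\ref{prop:odometer}, and your two-sided divisibility argument (every $M_n$ a power of $3$ from one direction of Lemma~\ref{lem:factor}, $2\mid M_k$ from the other) is a valid, if slightly more elaborate, variant of the paper's argument, which instead plants the element $(k_m,k_m,1)$ in $\Gamma_m$ so that flatness forces $m_{\Gamma'_n}=1$ and hence $(0,0,1)\in\bigcap\Gamma'_n$. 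For the flat-but-not-$(x,y)$-product class, ``rescaled if necessary'' conceals the step that actually requires work: Cortez's non-product example has rows with relatively prime entries, which by Proposition~\ref{prop:normal} forces $m_\Gamma=1$ and so cannot define a Heisenberg odometer at all; after rescaling by $2^nI$ her sufficient condition for non-productness no longer applies, so non-productness of the rescaled sequence must be re-established from scratch via the full criterion of Proposition~\ref{thm:zd} and Corollary~\ref{cor:nonprod} (the explicit check that $2^n\cdot16\cdot33^n$ never divides $2^m\cdot3^{m+1}$). You point at the right tool, but that verification, together with the merging argument above, is where the content of the theorem lives.
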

We first construct examples of odometers of each type.
We defer the proof of the rest of the theorem to the end of the section as it relies on lemmas which arise naturally in the construction of the examples.  

\begin{example}[{\bf Pure product Heisenberg odometer}]\label{ex:purep} One of many examples that can be constructed is given by
$A_n=\begin{pmatrix}2^n & 0\\ 0 & 2^n\end{pmatrix}$, $m_n=2^n$, $i_n=0$.
\end{example}

\begin{rem}\label{rem:xyprod}
A direct consequence of Lemma \ref{lem:factor} and Proposition \ref{prop:odometer} is that a Heisenberg odometer is of $(x,y)$-product type if and only if the associated $\mathbb Z^2$ odometer is of product type.  
\end{rem}
In producing  Heisenberg odometers which are not $(x,y)$-product we cannot simply use
Cortez's \cite{Co06} example of a non-product $\Z^2$ odometer.  That example is constructed with the following sequence of matrices:
\begin{equation}\label{cormat}
\begin{pmatrix}
3^{n+1} & 7\cdot 11^n\\
7\cdot 3^n & 11^{n+1}
\end{pmatrix}
\end{equation}
which have the property that each row has entries that are relatively prime, a sufficient condition for the resulting odometer to be non-product type.  However, by Proposition~\ref{prop:normal} any sequence of normal subgroups $\Gamma_n$ of the Heisenberg group with such a projection must have $m_{\Gamma_n}=1$ for all $n$ and therefore will not have trivial intersection.  Thus this family of non-product type $\mathbb Z^2$  examples  cannot be used to construct  Heisenberg odometers.  

In what follows we give a new necessary and sufficient condition for a $\Z^d$ odometer to be  conjugate to a product odometer which allows us to give examples  where the entries of the matrices have a non-trivial common factor.  Examples of flat and non $(x,y)$-product type Heisenberg odometers will follow immediately.  
Given $A\in M(d,\mathbb Z)$ and $m\in\mathbb Z$ we write $m | row_k(A)$ to mean $m$ divides all the entries in the $k$th row of $A$, and we define $m_k(A)=\min\{m\ge 1:m\vec{e}_k\in A\Z^d\}$.
 \begin{prop}\label{thm:zd} 
A $\Z^d$ odometer on $\overleftarrow{\Z^d/A_n\Z^d}$ is conjugate to a product odometer if and only if  the sequence  $\{A_n\}$, or some subsequence of it, satisfies for all $n\in\N$ and $1\leq k\leq d$ 
\begin{equation}
\label{thm:one}
 m_k(A_n)\,| row_k(A_{n+1})  \,. 
\end{equation}
\end{prop}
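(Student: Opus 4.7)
My plan centers on the correspondence $A \mapsto D(A) := \operatorname{diag}(m_1(A),\ldots,m_d(A))$ together with two elementary observations. First, since $m_k(A)e_k \in A\Z^d$ by definition, the inclusion $D(A)\Z^d \subset A\Z^d$ holds for every nonsingular $A$. Second, for any diagonal matrix $D=\operatorname{diag}(d_1,\ldots,d_d)$, the inclusion $A\Z^d \subset D\Z^d$ is equivalent to $d_k \mid \operatorname{row}_k(A)$ for every $k$, since this is exactly the requirement that each column of $A$ lie in $D\Z^d$. Under this dictionary, the hypothesis $m_k(A_n)\mid\operatorname{row}_k(A_{n+1})$ is a restatement of the inclusion $A_{n+1}\Z^d\subset D(A_n)\Z^d$, and conjugacy of $\Z^d$ odometers is governed by Lemma~\ref{lem:factor}.

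For the $(\Leftarrow)$ direction, after passing to the subsequence on which the hypothesis holds, I would set $D_n := D(A_n)$. Applying the nesting $A_{n+1}\Z^d\subset A_n\Z^d$ to $m_k(A_{n+1})e_k$ forces $m_k(A_n)\mid m_k(A_{n+1})$, so $\{D_n\Z^d\}$ is a nested sequence of diagonal sublattices, and its intersection is trivial because it sits inside $\bigcap_n A_n\Z^d=\{0\}$. Hence $\{D_n\}$ defines a product $\Z^d$ odometer. Combining $D_n\Z^d\subset A_n\Z^d$ (automatic) with $A_{n+1}\Z^d\subset D_n\Z^d$ (the hypothesis) interleaves the two sequences of subgroups, so Lemma~\ref{lem:factor} provides factor maps in both directions and yields the required conjugacy.

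For the $(\Rightarrow)$ direction, assume conjugacy to a product odometer on $\{B_m=\operatorname{diag}(b_m^1,\ldots,b_m^d)\}$. Fix $n$. Lemma~\ref{lem:factor} produces some $m$ with $B_m\Z^d\subset A_n\Z^d$, which, by the first observation, yields $m_k(A_n)\mid b_m^k$ for every $k$ from a single inclusion. The same lemma then produces some $l$, which we may take arbitrarily large by replacing it with any later index since the $A_l$ are nested, with $A_l\Z^d\subset B_m\Z^d$; this gives $b_m^k\mid\operatorname{row}_k(A_l)$ for every $k$. Composing the divisibilities produces $m_k(A_n)\mid\operatorname{row}_k(A_l)$ for all $k$ simultaneously, and then setting $n_1=1$, $n_{j+1}=l(n_j)>n_j$ extracts the desired subsequence.

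I do not expect any serious obstacle beyond keeping the direction of inclusions straight; the proof rests on the observation that $D(A)\Z^d$ is both the largest diagonal sublattice contained in $A\Z^d$ and the natural recipient of the divisibility hypothesis. The only mildly delicate bookkeeping is the simultaneous handling of all $d$ coordinates in the $(\Rightarrow)$ direction, but this is free once one notes that each of the inclusions $B_m\Z^d\subset A_n\Z^d$ and $A_l\Z^d\subset B_m\Z^d$ is a single vector containment that automatically delivers divisibility in every coordinate at once.
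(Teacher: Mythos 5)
Your proof is correct and follows essentially the same route as the paper: both directions hinge on the interleaving $A_n\Z^d\supset\Delta_n\Z^d\supset A_{n+1}\Z^d$ with $\Delta_n=\operatorname{diag}(m_1(A_n),\dots,m_d(A_n))$, together with the two observations that $m_k(A)\vec e_k\in A\Z^d$ forces $\Delta_n\Z^d\subset A_n\Z^d$ and that containment in a diagonal lattice is exactly row-wise divisibility. Your write-up just spells out a few details (nestedness and trivial intersection of the $D_n\Z^d$, the subsequence extraction) that the paper leaves implicit.
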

\begin{proof}
Assume that the odometer on $\overleftarrow{\Z^d/A_n\Z^d}$ is conjugate to the odometer on $\overleftarrow{ \Z^d/\Delta_n\Z^d}$ where each $\Delta_n\in M(d,\Z)$ is a nonsingular diagonal matrix. The inclusion $\Delta_n\Z^d\supset \Delta_{n+1}\Z^d$ is equivalent to
\[
\Delta_n(k,k) | \Delta_{n+1}(k,k)\quad \forall k=1,\dots,d\,.
\]
Also, by Lemma \ref{lem:factor}, and passing as necessary to a subsequence, one has
\begin{equation}\label{eq:incl}
A_n\Z^d\supset \Delta_n\Z^d\supset A_{n+1}\Z^d\,.
\end{equation}
Since $\Delta_n\vec{e}_k=\Delta_n(k,k)\vec{e}_k\in A_n\Z^d$, it follows that $m_k(A_n)|\Delta_n(k,k)$. From the relation $A_{n+1}\Z^d\subset \Delta_n\Z^d$ we also have that $\Delta_n(k,k)|row_k(A_{n+1})$ and \eqref{thm:one} follows.

Conversely, if \eqref{thm:one} holds, one considers the sequence of diagonal matrices $\{\Delta_n\}$ given by $\Delta_n(k,k)=m_k(A_n)$. Notice that $\Delta_n\Z^d\supset A_{n+1}\Z^d$. Moreover,
$\Delta_n\vec{e}_k=m_k(A_n)\vec{e}_k\in A_n\Z^d$, hence $\Delta_n\Z^d\subset A_n\Z^d$. Therefore \eqref{eq:incl} holds, and  the odometers on $\overline{\Z^d/A_n\Z^d}$ and $\overleftarrow{ \Z^d/\Delta_n\Z^d}$ are conjugate.
\end{proof}

For any nonsingular matrix $A\in M(d,\Z)$ one can factor out the greatest common (positive) factor of each row into a diagonal matrix $\Delta$ and write $A=\Delta\cdot \widehat{A}$ where each row of $\widehat{A}\in M(d,\Z)$ has relatively prime entries. 
For $d=2$, one can check that $m_k(A)$ as defined above satisfies $m_k(A)=\Delta(k,k)\cdot|\det(\widehat{A})|\,.$
Thus we have the following immediate corollary:
\begin{cor}\label{cor:nonprod}
A $\Z^2$ odometer on $\overleftarrow{\Z^2/A_n\Z^2}$ is conjugate to a product odometer if and only if the sequence  $\{A_n\}$, or some subsequence of it, satisfies for all $n\in\N$ and $1\leq k\leq 2$
\begin{equation}
\Delta_n(k,k)\cdot \det(\widehat{A}_n) \,| row_k(A_{n+1}).
\end{equation}
\end{cor}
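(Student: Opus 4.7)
The plan is to derive the corollary directly from Proposition~\ref{thm:zd} by establishing the identity $m_k(A) = \Delta(k,k)\cdot |\det(\widehat{A})|$ for any nonsingular $A\in M(2,\Z)$ and $k=1,2$, then substituting into the divisibility condition $m_k(A_n)\,|\,row_k(A_{n+1})$.

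The first step is a reduction to $\widehat{A}$. The condition $m\vec{e}_k\in A\Z^2$ says that there is some $\vec{v}\in\Z^2$ with $A\vec{v}=m\vec{e}_k$, equivalently $\widehat{A}\vec{v}=(m/\Delta(k,k))\vec{e}_k$. Since $\widehat{A}\vec{v}$ is an integer vector, this forces $\Delta(k,k)\mid m$, so writing $m=\Delta(k,k)m'$ reduces the problem to finding the least $m'\ge1$ with $m'\vec{e}_k\in\widehat{A}\Z^2$; at the end, $m_k(A)=\Delta(k,k)\cdot m_k(\widehat{A})$.

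The second step is the explicit $2\times 2$ computation. Write
\[
\widehat{A}=\begin{pmatrix} a & b \\ c & d\end{pmatrix},
\]
where by construction $\gcd(a,b)=\gcd(c,d)=1$. For $k=1$, the equation $\widehat{A}\vec{v}=m'\vec{e}_1$ gives $cx+dy=0$, whose integer solutions are precisely $(x,y)=(dt,-ct)$ with $t\in\Z$ (this uses $\gcd(c,d)=1$). Substituting into the first coordinate yields $m'=(ad-bc)t=\det(\widehat{A})\cdot t$, so the minimum positive value is $|\det(\widehat{A})|$. The case $k=2$ is symmetric, using $\gcd(a,b)=1$. Hence $m_k(\widehat{A})=|\det(\widehat{A})|$, and combining with the first step gives the desired identity.

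The final step is to substitute $m_k(A_n)=\Delta_n(k,k)\cdot|\det(\widehat{A}_n)|$ into Proposition~\ref{thm:zd}; the absolute value sign is irrelevant since divisibility is sign-invariant. The main (very mild) obstacle is keeping straight which row's gcd condition is used in solving for which coordinate; aside from that the argument is routine linear algebra over $\Z$.
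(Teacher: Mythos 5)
Your argument is correct and follows exactly the paper's route: the paper derives the corollary from Proposition~\ref{thm:zd} by asserting (with ``one can check'') the identity $m_k(A)=\Delta(k,k)\cdot|\det(\widehat{A})|$ for $d=2$, which is precisely what you prove via the reduction $m_k(A)=\Delta(k,k)\,m_k(\widehat{A})$ and the explicit solution of $cx+dy=0$ using the coprimality of the rows of $\widehat{A}$. Your write-up simply supplies the verification the paper leaves to the reader, and the observation that the sign of the determinant is irrelevant for divisibility is the right closing remark.
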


We can now construct a flat Heisenberg odometer which is not $(x,y)$-product.

\begin{example}[{\bf Flat but not $(x,y)$-product type Heisenberg odometer}]

We modify the matrices in \eqref{cormat} slightly:
\begin{equation}
A_n=\begin{pmatrix}
2^n & 0\\
0 & 2^n
\end{pmatrix}
\cdot\begin{pmatrix}
3^{n+1} & 7\cdot 11^n\\
7\cdot 3^n & 11^{n+1}
\end{pmatrix}
\end{equation}
and we consider the subgroups $\Gamma_n$ given by
the triple $(A_n\mathbb Z^2,2^n,0)$.
By Proposition \ref{prop:normalconv}  each $\Gamma_n$ is a normal subgroup of $H$. It is easy to check that  $\Gamma_{n+1}\subset \Gamma_n$ and $\cap \Gamma_n=\{(0,0,0)\}$, hence the $H$ odometer on $\overleftarrow{H/\Gamma_n}$ is flat. 
By Corollary~\ref{cor:nonprod} the associated $\mathbb Z^2$ odometer is not of product type.  Indeed, $\Delta_n(1,1)\cdot\det(\hat A_n)=2^n\cdot (-16)\cdot 33^n$, and  $A_m(1,1)=2^m\cdot 3^{m+1}$ and there are no $m>n$ so that $2^n\cdot (-16)\cdot 33^n$ divides $2^m\cdot 3^{m+1}$. 
\end{example}

Given any sequence $\{A_n\mathbb Z^2\}$ it is not obvious how to choose a non-trivial sequence $i_n$ so that the resulting sequence of subgroups will give rise to a non-flat odometer, or if it is possible to do so even for product type odometers.  Below we provide an example of an $(x,y)$-product but not flat Heisenberg odometer.

\begin{example}[{\bf $(x,y)$-product but not flat Heisenberg odometer}]\label{ex:kn}
 Consider the sequence of positive integers $\{k_n\}$ defined recursively by $k_1=2$ and  $k_{n+1}=k_n(k_n+1)$. For each $n$, we construct by Proposition \ref{prop:normal} the normal subgroup $\Gamma_n$ using the diagonal matrix $A_n=\left(\begin{array}{cc}k_n & 0\\0 & k_n\end{array}\right)$, positive integer $m_n=k_n$ and the map $i_n:A_n\Z^2\ra\Z$ defined as $i_n(x,y)=x/k_n$. Notice that 
 \begin{equation*}
 \Gamma_n=\{(k_nu,k_nv,k_nw+u):u,v,w\in\Z\}.
 \end{equation*}
We now check $\Gamma_{n+1}\subset \Gamma_n$ using Proposition~\ref{prop:odometer}. The only non-trivial condition to verify is \eqref{eq:cond}: if $x=k_{n+1}u$, then
$i_{{n+1}}(x,y)=u$ and $i_{n}(x,y)=(k_{n}+1)u=u \mod k_n$, as needed.

Also $\cap \Gamma_n=\{(0,0,0)\}$, otherwise if $(0,0,0)\ne (x,y,z)\in \cap \Gamma_n$, then at least one of $|x|,|y|,|z|\ge k_n$ for all $n\ge 1$, which is impossible since $k_n\ra \infty$. Therefore the $H$ odometer on $\overleftarrow{H/\Gamma_n}$ is of $(x,y)$-product type.

Notice that  $(k_n,k_n,1)\in\Gamma_n$ for every $n\in \N$.  If this example has an odometer factor on $\overleftarrow{H/\Gamma'_n}$ where each subgroup $\Gamma'_n$ is flat, then by Lemma \ref{lem:factor}  it would follow that for any $\Gamma'_n$ there exists $\Gamma_{m}$ such that $ \Gamma_{m}\subset \Gamma'_n$, and so $(k_m,k_m,1)\in \Gamma'_n$, as well. 

Now suppose that $i_{\Gamma'_n}\equiv 0$.   
This contradicts the fact that $\Gamma'_n$ is flat: for if $i_{\Gamma'_n}\equiv 0$ and there is an element of the form $(k_m,k_m,1)\in\Gamma'_n$ for all $n$, then Proposition~\ref{prop:struc} implies that $(0,0,1)\in\Gamma'_n$ for all $n,$ and therefore the sequence of subgroups do not have trivial intersection.
\end{example}

\begin{example}[{\bf Not flat and not $(x,y)$-product Heisenberg odometer}]  Using the sequence $\{k_n\}$ defined in the previous example and the matrices in~\eqref{cormat}, we consider the sequence of matrices
\begin{equation}
A_n=\begin{pmatrix}
k_n & 0\\
0 & k_n
\end{pmatrix}
\cdot\begin{pmatrix}
3^{n+1} & 7\cdot 11^n\\
7\cdot 3^n & 11^{n+1}
\end{pmatrix},
\end{equation}
and the normal subgroups given by the triples $(A_n, k_n, i_n)$ where $i_n:A_n\Z^2\ra\Z$ is defined as $i_n(x,y)=x/k_n$. We can describe $\Gamma_n$ as 
\[
\{(k_n(3^{n+1}\cdot u+7\cdot 11^{n}\cdot v),k_n(7\cdot 3^n\cdot u+11^{n+1}\cdot v), k_n\cdot w+3^{n+1}\cdot u+7\cdot 11^{n}\cdot v):u,v,w\in\Z\}.
\]
The inclusion $\Gamma_{n+1}\subset \Gamma_n$ follows from Proposition~\ref{prop:odometer}: in order to verify \eqref{eq:cond} notice that if $x=k_{n+1}(3^{n+2}u+7\cdot 11^{n+1}v)$, then
$i_{{n+1}}(x,y)=3^{n+2}u+7\cdot 11^{n+1}v$ and $i_{n}(x,y)=(k_{n}+1)(3^{n+2}u+7\cdot 11^{n+1}v)=3^{n+2}u+7\cdot 11^{n+1}v\mod k_n$, as needed.

An argument similar to that of  the previous example shows that  $\cap \Gamma_n=\{(0,0,0)\}$, hence the sequence$\{\Gamma_n\}$ defines a Heisenberg odometer. 

By choosing $u,v\in \Z$ such that $3^{n+1}\cdot u+7\cdot 11^{n}\cdot v=1$ and letting $w=0$, we have  $(k_n,k_n,1)\in\Gamma_n$ for every $n\in \N$. We conclude, as above, that the odometer on $\overleftarrow{H/\Gamma_n}$ cannot be flat.

We analyze now the associated $\Z^2$ odometer, $\overleftarrow{\Z^2/A_n\Z^2}$ and show that the condition stated in Corollary \ref{cor:nonprod} is not satisfied. Indeed, $\Delta_n(1,1)\cdot\det(\hat A_n)=k_n\cdot (-16)\cdot 33^n$, and  $A_m(1,1)=k_m\cdot 3^{m+1}$ and there are no $m>n$ so that $k_n\cdot (-16)\cdot 33^n$ divides $k_m\cdot 3^{m+1} $, since each integer $k_m=2\cdot odd$.  
\end{example}

\begin{proof}[Proof of of Theorem~\ref{thm:main}]
We need only verify that if a Heisenberg odometer is both flat and $(x,y)$-product then it is pure product.  Suppose the odometer on
$\overleftarrow{H/\Gamma_n}$ is flat and $(x,y)$-product. Let $\overleftarrow{H/\Gamma'_n}$ be the conjugate flat odometer defined by $(A'_n,m'_n,0)$. By transitivity of conjugacy the odometer on $\overleftarrow{H/\Gamma'_n}$ must also be $(x,y)$-product type and, as noted in  Remark  \ref{rem:xyprod},  this implies that the associated $\mathbb Z^2$ odometer is product type.  Using (\ref{eq:incl}) we have diagonal matrices $\Delta_n'$ such that (after passing to a subsequence, if necessary)
$A'_n\Z^2\supset \Delta'_n\Z^2\supset A'_{n+1}\Z^2\,.$
We claim that the pure product odometer on $\overleftarrow{H/\tilde\Gamma'_n}$ defined by $(\Delta'_n, m'_n,0)$ is conjugate to the odometer on $\overleftarrow{H/\Gamma'_n}$, and therefore to the one  on $\overleftarrow{H/\Gamma_n}$.  This follows immediately from Lemma~\ref{lem:factor} since the property of $\Delta'_n$ guarantees that $\Gamma'_n\supset \tilde\Gamma'_n\supset \Gamma'_{n+1}$.
\end{proof}

\section{Spectral Analysis of Heisenberg Odometer Actions}\label{sec:spectral}
 As was discussed in the introduction, Mackey \cite{M64} has shown that a $G$ odometer,  for $G$ any discrete, finitely generated and residually finite group, has discrete spectrum by showing that the action of $G$ on the inverse limit space $\iG$ is a sub-action of the compact group $\iG$ acting on itself by rotation.   In this section we analyze the exact nature of this decomposition.

It is clear that any one dimensional irreducible representation that appears comes from eigenfunctions of the group action.   In \cite{CP08} the authors show that the eigenvalues of a $G$ odometer are those characters $\phi:G\rightarrow S^1$ of the group $G$ for which $\phi(\gamma)=1$ for all $\gamma\in\Gamma_n$ for some $n$ and that the functions $f=\sum_{\gamma\in G/\Gamma_n}\phi(\gamma)\chi_{[n;\gamma]}$ are the corresponding eigenfunctions.    Below we present an alternate proof that odometer actions have discrete spectrum that allows us to identify the rest of the representations that occur in the decomposition explicitly and in terms of the irreducible, finite dimensional representations of the group $G$, as opposed to the group $\iG$.  

In the case of the discrete Heisenberg group $H$ this approach allows us to say much more.   An easy computation shows that if $\phi:H\rightarrow S^1$ is a character, then $\phi$ only depends on the first two components $(x,y)$.  Thus the one dimensional representations of a Heisenberg odometer are determined entirely by the eigenvalues of its associated $\mathbb Z^2$ odometer.   We describe below the eigenvalues of a $\mathbb Z^2$ odometer on $\iA$ in terms of the sequence of matrices $A_n$ and using the results of Section~\ref{sec:heisclassify} we identify explicitly those finite dimensional, irreducible representations of the Heisenberg group which arise in the spectral decomposition of the $H$ odometer action on $\iH$ as a function of the triples $(A_n,m_n,i_n)$ defining the subgroups $\Gamma_n$.

 \subsection{Discrete spectrum of general odometer actions}
  Let $G$ be a discrete, finitely generated, and residually finite group.  Fix a $G$ odometer on $\iG$.  Let $U:G\times\ L^2(\iG,\mu)\ra L^2(\iG,\mu)$ denote the induced unitary operator of the odometer action defined by 
 \begin{equation}
U(g)(f(\mathbf x))=f(g^{-1}\mathbf x).
\end{equation} 

 \begin{thm}\label{discrete}
$U$ admits a decomposition into finite dimensional irreducible representations of $G$ of the form
 $U=\bigoplus_{k=1}^\infty U_k$
 with the property that there exists an increasing sequence $k_n\rightarrow\infty$ so that for all $n$,  $\bigoplus_{k=1}^{k_n}U_k$ is equivalent to the unique irreducible decomposition of the regular representation of $G/\Gamma_n$.
\end{thm}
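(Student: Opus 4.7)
The plan is to realize $L^2(\iG,\mu)$ as the closure of an ascending chain of finite dimensional $G$-invariant subspaces on which the $G$-action factors through the left regular representation of $G/\Gamma_n$, and then build the irreducible decomposition inductively along the chain.

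First, for each $n$ set
\[
V_n := \mathrm{span}\set{\chi_{[n;\gamma]} : \gamma \in G/\Gamma_n} \subset L^2(\iG,\mu).
\]
The level-$n$ cylinder sets partition $\iG$ into $[G:\Gamma_n]$ pieces of equal Haar measure, so the functions $\chi_{[n;\gamma]}$ are pairwise orthogonal and $\dim V_n = [G:\Gamma_n]$.  Because $\chi_{[n;\gamma]} = \sum_{\gamma' \in \pi_n^{-1}(\gamma)} \chi_{[n+1;\gamma']}$, we have $V_n \subset V_{n+1}$; and since the cylinders generate the Borel $\sigma$-algebra of $\iG$, $\bigcup_n V_n$ is dense in $L^2(\iG,\mu)$.

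Second, I would verify that $U(g) V_n = V_n$ and that the induced representation of $G$ on $V_n$ is equivalent to the left regular representation of $G/\Gamma_n$.  Since the $G$-action on $\iG$ is by left multiplication and each $\Gamma_n$ is normal, one has $g \cdot [n;\gamma] = [n; g\gamma]$, while any element of $\Gamma_n$ acts as the identity on level-$n$ cylinders.  Consequently $U(g)$ permutes the orthogonal basis $\set{\chi_{[n;\gamma]}}$ of $V_n$ in exactly the way $G/\Gamma_n$ acts on itself by left multiplication, identifying this restricted representation with the regular representation of $G/\Gamma_n$.

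Third, I would build the irreducible decomposition inductively.  By standard finite group representation theory, $V_n$ admits an orthogonal decomposition $V_n = U_1 \oplus \cdots \oplus U_{k_n}$ into irreducible $G$-subrepresentations, unique up to unitary equivalence, realizing the irreducible decomposition of the regular representation of $G/\Gamma_n$.  Because $V_n$ is a $G$-invariant closed subspace of $V_{n+1}$, the orthogonal complement $W_{n+1} := V_{n+1} \ominus V_n$ is also $G$-invariant, and decomposing $W_{n+1}$ into irreducibles extends the chosen decomposition of $V_n$ to one of $V_{n+1}$ of the form $U_1 \oplus \cdots \oplus U_{k_{n+1}}$ with $k_{n+1} > k_n$.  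Passing to the Hilbert space direct sum and invoking density of $\bigcup_n V_n$ in $L^2(\iG,\mu)$ then yields $U = \bigoplus_{k=1}^\infty U_k$ with the stated compatibility property.

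The main obstacle lies in keeping the decompositions compatible from one stage to the next — that the choice made at level $n+1$ genuinely \emph{extends} the one at level $n$, rather than being merely isomorphic to a refinement of it.  This is secured by decomposing the $G$-invariant orthogonal complement $W_{n+1}$ and appending the new irreducible summands, relying only on complete reducibility of unitary representations of finite groups.  The identification of the $V_n$-action with the regular representation of $G/\Gamma_n$ is tautological once normality of $\Gamma_n$ is invoked, and the density of $\bigcup_n V_n$ is a routine consequence of the definition of the inverse-limit topology.
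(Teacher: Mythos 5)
Your proposal is correct and follows essentially the same route as the paper: the subspaces $V_n$ are exactly the paper's $L^2(X_n)$, the identification with the regular representation of $G/\Gamma_n$ is the same, the inductive extension by decomposing the invariant complement is the paper's application of Maschke's theorem (your explicit compatibility of the $G$-action across levels is what the paper isolates as Lemma~\ref{lem-comp}), and the density argument plays the same closing role. No substantive differences.
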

 
Before proving the result, we introduce some additional notation. Let $\Sigma$ be the Borel $\sigma$-algebra on $\iG$ generated by all cylinder sets $[n;\gamma]$, $n\ge 1$ and $\gamma\in G/\Gamma_n$; for each $n\ge 1$, let $\Sigma_n$ be the $\sigma$-algebra generated by the $n^{th}$-stage cylinder sets, and let $\mu_n=\mu_{|\Sigma_n}$ be the induced probability measure on the cylinder sets, which is normalized counting measure.    Let $X_n=(\iG, \Sigma_n, \mu_n)$.  If $g\in \Gamma_n$, then $U(g)(\chi_{[n;\gamma]})=\chi_{[n;g\gamma]}=\chi_{[n;\gamma]}$, so the unitary operator $U$ induces a representation $U^{(n)}$ of $G/\Gamma_n$ on  $L^2(X_n)$. By using the  natural unitary isomorphism between the finite dimensional spaces $L^2(G/\Gamma_n)$ and $L^2(X_n)$ given by $\chi_\gamma\mapsto \chi_{[n;\gamma]}$,  one easily checks that $U^{(n)}$ is equivalent to the regular representation of $G/\Gamma_n$ over $L^2(G/\Gamma_n)$. 

This equivalence allows us to use the machinery of regular representations of finite dimensional groups to prove Theorem~\ref{discrete} constructively.  In particular we will use two classical results which we state together in the following theorem, reformulated in our context.

\begin{thm}\label{Mas}{\em (Maschke's Theorem)}\cite{DF}  For every $n$, 
$L^2(X_n)=\bigoplus_{1\le k\le {k_n}} \mF^n_k$ where each $\mF^n_k$ is an irreducible, $U^{(n)}$ invariant subspace.  Furthermore, suppose that $V\subset L^2(X_n)$ is a $U^{(n)}$ invariant subspace.  Then $L^2(X_n)$ admits a decomposition of the form
$V\oplus \mF^n_{k_1}\oplus \cdots \oplus \mF^n_{k_j}$
for some sub collection of the subspaces $\mF^n_k$.
\end{thm}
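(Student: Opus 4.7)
The plan is to exploit the fact that $U^{(n)}$ is a unitary representation of the finite group $G/\Gamma_n$ on the finite-dimensional Hilbert space $L^2(X_n)$. Unitarity is immediate from the preceding observation that $U^{(n)}$ is equivalent to the regular representation on $L^2(G/\Gamma_n)$; concretely, $U^{(n)}(g)$ simply permutes the orthogonal basis $\{\chi_{[n;\gamma]}\}_{\gamma\in G/\Gamma_n}$. I would record as a preliminary lemma the standard consequence: if $W\subset L^2(X_n)$ is $U^{(n)}$-invariant, then so is $W^{\perp}$, since for $w'\in W^\perp$, $w\in W$ and $g\in G/\Gamma_n$ one has $\langle U^{(n)}(g)w',w\rangle=\langle w',U^{(n)}(g^{-1})w\rangle=0$.

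For the first assertion I would proceed by induction on $\dim L^2(X_n)$. If $L^2(X_n)$ is already $U^{(n)}$-irreducible, there is nothing to do ($k_n=1$). Otherwise, choose a nonzero proper $U^{(n)}$-invariant subspace $\mF^n_1$ of minimal dimension; minimality forces it to be irreducible. By the lemma, $(\mF^n_1)^\perp$ is also invariant and of strictly smaller dimension, so by induction it decomposes as $\bigoplus_{k=2}^{k_n}\mF^n_k$ with each summand irreducible. Combining these yields the desired orthogonal decomposition.

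For the second assertion I would argue by a greedy exchange. Fix the decomposition $L^2(X_n)=\bigoplus_k \mF^n_k$ produced above and let $V$ be any $U^{(n)}$-invariant subspace. The key observation is that for every index $k$, the intersection $V\cap \mF^n_k$ is $U^{(n)}$-invariant and sits inside the irreducible $\mF^n_k$, hence equals either $\{0\}$ or $\mF^n_k$. If $V=L^2(X_n)$ there is nothing to do. Otherwise, since $\sum_k \mF^n_k=L^2(X_n)$, at least one $\mF^n_{k_1}$ cannot be contained in $V$; by the dichotomy $V\cap\mF^n_{k_1}=\{0\}$, so the sum $V\oplus\mF^n_{k_1}$ is direct and strictly larger than $V$. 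Iterating, dimensions strictly increase, so after finitely many steps we arrive at $V\oplus\mF^n_{k_1}\oplus\cdots\oplus\mF^n_{k_j}=L^2(X_n)$.

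The main obstacle is organizational rather than conceptual: one must verify unitarity in our concrete cylinder-set realization and check that the greedy procedure preserves the direct-sum structure at each step. Since the entire argument takes place inside the regular representation of the finite group $G/\Gamma_n$, the statement ultimately reduces to classical Maschke/complete reducibility, which is why the paper simply invokes \cite{DF}.
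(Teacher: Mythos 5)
Your proof is correct: the paper offers no argument of its own here, simply citing \cite{DF} for this classical complete-reducibility statement, and your argument (unitarity of $U^{(n)}$ on the permuted basis $\{\chi_{[n;\gamma]}\}$, invariance of orthogonal complements, induction on dimension for the first claim, and the greedy exchange using the dichotomy $V\cap\mF^n_k\in\{\{0\},\mF^n_k\}$ for the second) is exactly the standard textbook proof being invoked. Nothing is missing.
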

It is obvious that $L^2(X_n)\subset L^2(X_{n+1})$. Moreover, a compatibility relation exists between the $G/\Gamma_{n}$-representation, $U^{(n)}$ and the $G/\Gamma_{n+1}$-representation, $U^{(n+1)}$.

\begin{lem}\label{lem-comp}
For every $\gamma\in G/\Gamma_{n+1}$,  $U^{(n+1)}(\gamma)$ restricted to $L^2(X_n)$ coincides with $U^{(n)}({\pi_n(\gamma)})$.
\end{lem}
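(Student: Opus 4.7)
The plan is to check the claimed equality on the natural basis $\{\chi_{[n;\alpha]} : \alpha \in G/\Gamma_n\}$ of $L^2(X_n)$, since both $U^{(n+1)}(\gamma)\vert_{L^2(X_n)}$ and $U^{(n)}(\pi_n(\gamma))$ are linear and $L^2(X_n)$ is finite dimensional. The proof then reduces to bookkeeping on cylinder sets using the definitions.

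First I would unwind the inclusion $L^2(X_n)\subset L^2(X_{n+1})$. Because each $\Gamma_k$ is normal, $\pi_n:G/\Gamma_{n+1}\to G/\Gamma_n$ is a group homomorphism and each $n$th-stage cylinder decomposes as a disjoint union
\[
[n;\alpha]=\bigsqcup_{\beta\in \pi_n^{-1}(\alpha)}[n+1;\beta],\qquad\text{so}\qquad \chi_{[n;\alpha]}=\sum_{\beta\in \pi_n^{-1}(\alpha)}\chi_{[n+1;\beta]}.
\]
Next I would record the elementary fact, directly from the definition $U(g)f(\mathbf{x})=f(g^{-1}\mathbf{x})$ and the fact that $G$ acts on $\iG$ coordinatewise by left multiplication, that $U(g)\chi_{[k;\delta]}=\chi_{[k;g\delta]}$ for any $k$ and any coset $\delta\in G/\Gamma_k$. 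In particular, writing $\gamma=g\Gamma_{n+1}$ for some $g\in G$, one has $U^{(n+1)}(\gamma)\chi_{[n+1;\beta]}=\chi_{[n+1;g\beta]}$.

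Combining these two ingredients, I compute
\[
U^{(n+1)}(\gamma)\chi_{[n;\alpha]}=\sum_{\beta\in \pi_n^{-1}(\alpha)}\chi_{[n+1;g\beta]}.
\]
The last step is to observe that $\beta\mapsto g\beta$ is a bijection from $\pi_n^{-1}(\alpha)$ onto $\pi_n^{-1}(\pi_n(\gamma)\alpha)$, since $\pi_n$ is a homomorphism and so $\pi_n(g\beta)=\pi_n(\gamma)\pi_n(\beta)=\pi_n(\gamma)\alpha$. Re-assembling the sum gives
\[
\sum_{\beta\in \pi_n^{-1}(\alpha)}\chi_{[n+1;g\beta]}=\chi_{[n;\pi_n(\gamma)\alpha]}=U^{(n)}(\pi_n(\gamma))\chi_{[n;\alpha]},
\]
which is exactly what we want.

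There is no real obstacle in this argument; the only subtle point that must be highlighted is that left multiplication of a coset by a group element is well-defined, and that $\pi_n$ is a homomorphism. Both facts depend on the normality of the subgroups $\Gamma_k$, which is precisely the odometer (as opposed to subodometer) hypothesis. Without normality the statement would need to be reformulated at the level of $G$-sets rather than representations.
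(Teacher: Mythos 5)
Your proof is correct and follows essentially the same route as the paper's: both verify the identity on the characteristic functions $\chi_{[n;\alpha]}$, decompose each $n$th-stage cylinder into $(n+1)$st-stage cylinders, and reduce everything to the observation that left translation by $\gamma$ carries $\pi_n^{-1}(\alpha)$ bijectively onto $\pi_n^{-1}(\pi_n(\gamma)\alpha)$. The only cosmetic difference is that you start from the $U^{(n+1)}$ side while the paper starts from the $U^{(n)}$ side; the chain of equalities is the same read in reverse.
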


\begin{proof}
It is sufficient to verify the condition for a given $\bar\gamma\in G/\Gamma_n$ and the associated characteristic function $\chi_{[n;\bar\gamma]}$:
\begin{eqnarray*}
U^{(n)}({\pi_n(\gamma)})\chi_{[n;\bar\gamma]}&=&\chi_{[n;\pi_n(\gamma)\bar\gamma]}=\sum_{\hat \gamma\in\pi_n^{-1}(\pi_n(\gamma)\bar\gamma)} \chi_{[n+1;\hat \gamma]}=\sum_{\hat \gamma\in\gamma\pi_n^{-1}(\bar\gamma)} \chi_{[n+1;\hat \gamma]}\\
&=&\sum_{\tilde \gamma\in\pi_n^{-1}(\bar\gamma)} \chi_{[n+1;\gamma\tilde \gamma]}
=U^{(n+1)}({\gamma})\chi_{[n;\bar\gamma]}.
\end{eqnarray*}
Here we used the fact that $\pi_n^{-1}(\pi_n(\gamma)\bar\gamma)=\gamma\pi_n^{-1}(\bar\gamma)$ which follows from the definition of $\pi_n$. Indeed,
\begin{equation*}
\begin{split}
\hat \gamma\in\pi_n^{-1}(\pi_n(\gamma)\bar\gamma)&\Leftrightarrow\pi_n(\hat \gamma)=\pi_n(\gamma)\bar\gamma\Leftrightarrow \bar \gamma= \pi_n(\gamma^{-1})\pi_n(\hat\gamma) \\ &\Leftrightarrow\gamma^{-1}\hat\gamma\in \pi_n^{-1}(\bar \gamma)\Leftrightarrow\hat \gamma\in \gamma\pi_n^{-1}(\gamma). 
\end{split}
\end{equation*}
\end{proof}

We are now ready to prove Theorem~\ref{discrete}.  Lemma~\ref{lem-comp} shows that $L^2(X_n)\subset L^2(X_{n+1})$ is a $U^{(n+1)}$ invariant subspace therefore
by Theorem~\ref{Mas} we have a decomposition
\[
L^2(X_{n+1})=L^2(X_n)\oplus \mF_{k_n+1}^{n+1}\oplus\cdots\oplus \mF_{k_{n+1}}^{n+1}
\]
into $U^{(n+1)}$ (and therefore $U$) invariant subspaces. 

Lemma~\ref{lem-comp} also implies that the decomposition of $L^2(X_n)$ into $U^{(n)}$ invariant irreducible subspaces is a decomposition into $U^{(n+1)}$ invariant and irreducible subspaces, therefore 
$\mF_1^n\oplus\cdots\oplus\mF_{k_n}^n\oplus\mF_{k_n+1}^{n+1}\oplus\cdots\oplus\mF_{k_{n+1}}^{n+1}$
is a decomposition of $L^2(X_{n+1})$ into  $U^{(n+1)}$ (and thus $U$) invariant, irreducible, finite dimensional subspaces.

So for all $n$, by relabelling, we have the  decomposition
$L^2(X_n)=\bigoplus_{1\le k\le{k_n}}\mF_k$
such that $U_k=U_{|_{\mF_k}}$ is an irreducible,  finite dimensional representation of $G$.  Recalling the fact that $U^{(n)}$ is equivalent to the regular representation of $G/\Gamma_n$, the collection $U_{k}$ for $k=1,\cdots,k_n$ must include all representations of that finite group.  

To complete the argument note that the space
$\mathfrak V=\bigoplus_{k=1}^\infty\mF_k$
contains all characteristic functions $\chi_{[n;\gamma]}$.  
Furthermore, the collection of characteristic functions generates an algebra of continuous functions that separates points in $\iG$, is closed under complex conjugation and contains the constants.  Therefore, by  the Stone-Weierstrass Theorem $\mathfrak V$ contains the continuous functions on $\iG$, and by the density of the continuous functions in the $L^2$ norm, must therefore be all of $L^2(\iG)$.

If in addition we suppose that $G$ is a discrete, countable, and amenable group, recent developments in the field yield the 
following immediate corollary.
\begin{cor}\label{entro}
If $G$ is a discrete, countable, and amenable group then every $G$ odometer action has zero entropy.
\end{cor}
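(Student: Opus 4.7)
The plan is to derive zero entropy from Theorem~\ref{discrete} by realizing the $G$ odometer as an inverse limit of finite actions whose entropies are manifestly zero. Recall from the proof of Theorem~\ref{discrete} that $(\iG,\mu,G)$ carries the nested sequence of $G$-invariant sub-$\sigma$-algebras $\Sigma_n$, with $(X_n,\mu_n,G)$ isomorphic to the left multiplication action of $G$ on the finite quotient $G/\Gamma_n$ equipped with normalized counting measure. Since $\bigvee_n\Sigma_n=\Sigma$, the odometer is the inverse limit of the finite systems $(X_n,\mu_n,G)$.

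First I would check that each finite factor has zero entropy. Fix a F{\o}lner sequence $\{F_k\}$ in $G$ and any finite partition $\mathcal P$ of $X_n$. Because the action on $X_n$ factors through the finite group $G/\Gamma_n$, the family of translates $\{g^{-1}\mathcal P:g\in F_k\}$ contains at most $|G/\Gamma_n|$ distinct partitions, so the Shannon entropy of their join is uniformly bounded in $k$ by $\log|X_n|^{|G/\Gamma_n|}$. Dividing by $|F_k|\to\infty$ forces $h_\mu(G,\mathcal P)=0$ for every $\mathcal P$, hence $h_\mu(X_n,G)=0$.

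Next I would invoke the fact, established in the Ornstein--Weiss framework \cite{OW}, that entropy is continuous along inverse limits for countable amenable group actions: if $\Sigma_n\nearrow\Sigma$ is an increasing sequence of $G$-invariant sub-$\sigma$-algebras generating $\Sigma$, then $h_\mu(\iG,G)=\sup_n h_\mu(X_n,G)$. Since every term on the right vanishes, so does the entropy of the odometer.

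The main obstacle is pinpointing and citing precisely this inverse-limit continuity for countable amenable groups; it is implicit in the quasi-tiling machinery but is not always recorded as a standalone statement in the classical literature. A cleaner alternative, easier to justify in one line, is to observe that Theorem~\ref{discrete} realizes $(\iG,\mu,G)$ as translation by $G$ on the compact group $\iG$, and then to invoke the general fact that translation actions of countable amenable groups on compact groups have vanishing entropy.
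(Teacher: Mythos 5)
Your argument is correct, but it takes a genuinely different route from the paper. The paper's proof is a short argument by contradiction that leans on Theorem~\ref{discrete}: if the entropy were positive, the Ornstein--Weiss machinery \cite{OW} would produce a Bernoulli factor, which by \cite{Avni} would have countable Lebesgue spectrum, contradicting the discrete spectrum just established. Your proof instead ignores the spectral theorem entirely and works directly with the inverse-limit structure: each finite factor $(X_n,\mu_n,G)$ trivially has zero entropy (your F{\o}lner bound is sound --- in fact the cruder bound $H\bigl(\bigvee_{g\in F_k}g^{-1}\mathcal P\bigr)\le\log|X_n|$ already suffices, since any join of partitions of a finite set has at most $|X_n|$ atoms), and then one passes to the limit using $h_\mu(\iG,G)=\sup_n h_\mu(X_n,G)$ for an increasing generating sequence of invariant sub-$\sigma$-algebras. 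That continuity statement is standard (it follows from $h(\mathcal P)\le h(\mathcal Q)+H(\mathcal P\mid\mathcal Q)$ together with the approximation of any finite partition by $\Sigma_n$-measurable ones) and is available in the Ornstein--Weiss framework, so the ``obstacle'' you flag is only a matter of locating a citation, not a mathematical gap. The trade-off: the paper's route is a one-line deduction from Theorem~\ref{discrete} but imports two substantial external results (the Sinai-type factor theorem for amenable groups and Avni's spectral result), whereas yours is more elementary and self-contained, and in particular shows that zero entropy does not actually depend on the spectral analysis. Your closing alternative --- that the odometer is a translation action on the compact group $\iG$ and hence isometric, so has zero entropy --- is also valid, though it again requires a citation (or the variational principle for amenable actions) and, strictly speaking, that realization comes from Mackey's observation quoted in Section~\ref{sec:spectral} rather than from Theorem~\ref{discrete} itself.
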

\begin{proof}
Suppose not.  It would then follow from \cite{OW} that $(\iG,\mu,G)$ has a Bernoulli factor.  By \cite{Avni} this factor would have countable Lebesgue spectrum, contradicting Theorem~\ref{discrete}.
\end{proof}
In \cite{CP08} the authors prove that every sub-odometer is the measure theoretic factor of an odometer.  The following result then follows immediately from Theorem~\ref{discrete} and Corollary~\ref{entro}.
\begin{cor}
Every $G$ sub-odometer has discrete spectrum.  If, in addition $G$ is a discrete, countable, and amenable group, then every $G$ sub-odometer has zero entropy.
\end{cor}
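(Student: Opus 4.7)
The strategy is to bootstrap everything from the two facts immediately preceding the statement: (i) the result of \cite{CP08} that every $G$ sub-odometer is a measure theoretic factor of a $G$ odometer, (ii) Theorem~\ref{discrete}, which gives a discrete spectrum decomposition for $G$ odometers, and (iii) Corollary~\ref{entro}, giving zero entropy for odometer actions when $G$ is amenable. So the plan is to prove both assertions by invoking standard ``passage to factors'' arguments.

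First I would fix a $G$ sub-odometer $(Y,\nu,G)$ and, using \cite{CP08}, realize it as a factor of a $G$ odometer $(\iG,\mu,G)$ via a measure preserving equivariant map $\pi:\iG\to Y$. This induces an isometric embedding $\pi^{*}:L^{2}(Y,\nu)\hookrightarrow L^{2}(\iG,\mu)$ whose image $V=\pi^{*}L^{2}(Y,\nu)$ is a closed $U$-invariant subspace, and the restriction of $U$ to $V$ is unitarily equivalent to the Koopman representation of the sub-odometer. By Theorem~\ref{discrete}, $U=\bigoplus_{k\ge 1}U_{k}$ with each $U_{k}$ a finite dimensional irreducible representation of $G$ on $\mathfrak F_{k}$. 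Since $V$ is $U$-invariant, orthogonal projection onto $V$ commutes with every $U(g)$, so each $\mathfrak F_{k}\cap V$ is $U_{k}$-invariant; by irreducibility this intersection is either $\{0\}$ or all of $\mathfrak F_{k}$, forcing $V=\bigoplus_{k\in K}\mathfrak F_{k}$ for some $K\subset\N$. Thus the Koopman representation of the sub-odometer is a direct sum of finite dimensional irreducible $G$-representations, which is exactly discrete spectrum.

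For the entropy statement, when $G$ is discrete, countable and amenable, entropy is monotone under factor maps: $h_{\nu}(Y,G)\le h_{\mu}(\iG,G)$. By Corollary~\ref{entro} the right hand side equals zero, so $h_{\nu}(Y,G)=0$.

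The only mildly delicate point is the irreducibility-plus-invariance step in the second paragraph: a priori $\mathfrak F_{k}\cap V$ could be a proper nonzero subspace of $\mathfrak F_{k}$, but since $\mathfrak F_{k}$ is finite dimensional and $U_{k}$-invariant, irreducibility of $U_{k}$ excludes this, and the decomposition of $V$ follows cleanly. Everything else is a direct appeal to cited results, so there is no real obstacle beyond being careful that the factor map is measure preserving and equivariant, which is exactly what \cite{CP08} provides.
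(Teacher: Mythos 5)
Your overall route is the same as the paper's: the paper simply cites the fact from \cite{CP08} that every sub-odometer is a measure theoretic factor of an odometer and then declares the corollary immediate from Theorem~\ref{discrete} and Corollary~\ref{entro}, which is exactly the factor-passage argument you spell out. The entropy half of your argument (monotonicity of entropy under factor maps for amenable groups, plus Corollary~\ref{entro}) is fine.

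However, the step you yourself flag as the delicate one is where the argument actually breaks. From ``$\mF_k\cap V$ is $U_k$-invariant, hence $\{0\}$ or $\mF_k$'' you cannot conclude $V=\bigoplus_{k\in K}\mF_k$. A closed invariant subspace of a direct sum of finite dimensional irreducibles need not be a sub-sum of the given summands: if $U_j$ and $U_k$ are equivalent irreducibles and $\phi:\mF_j\to\mF_k$ is an intertwiner, the diagonal subspace $\{v+\phi(v):v\in\mF_j\}$ is invariant but meets both $\mF_j$ and $\mF_k$ trivially. This is not a hypothetical worry here: the decomposition in Theorem~\ref{discrete} realizes the regular representations of the finite groups $G/\Gamma_n$, in which every irreducible occurs with multiplicity equal to its dimension, so repeated summands are unavoidable. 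The conclusion you want is still true, and the repair is short: for any finite dimensional $U$-invariant subspace $F\subseteq L^2(\iG,\mu)$, the projection $P_V F$ is a finite dimensional $U$-invariant subspace of $V$ (since $P_V$ commutes with $U$), and for $v\in V$ one has $\|v-P_Vw\|\le\|v-w\|$ for all $w\in F$; since finite dimensional invariant subspaces are dense in $L^2(\iG,\mu)$ by Theorem~\ref{discrete}, the subspaces $P_VF$ are dense in $V$, and each finite dimensional unitary invariant subspace of $V$ splits into irreducibles by complete reducibility. With that substitution the proof goes through.
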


\subsection{Spectrum of $\mathbb Z^2$ odometers}
Let us fix a $\mathbb Z^2$ odometer on $\iA$. Let
$\phi(x,y)=e^{2\pi i(\alpha x+\xi y)}$
denote a character of $\mathbb Z^2$.    By an abuse of notation we refer to the pair $(\alpha,\xi)$, rather than the associated character, as an eigenvalue of the action.  We further only concern ourselves with $(\alpha,\xi)\in\mathbb T^2$. For the remaining of this section we let the quotient notation, i.e. $\mathbb Z^2/A \mathbb Z^2$, mean a specific complete collection of coset representatives. 

Cortez \cite{Co06} shows that a pair $(\alpha,\xi)$ is an eigenvalue of the odometer action if and only if there exists $n$ so that
\begin{equation}\label{eval1}
(\alpha,\xi)^T\in\left((A_n^T)^{-1}\mathbb Z^2\right)/\mathbb Z^2.
\end{equation} 
An easy computation shows that (\ref{eval1}) holds if and only if there exists a vector $\mathbf{v}\in \mathbb Z^2/A^T_n\mathbb Z^2$
such that
\begin{equation}\label{eval}
(\alpha,\xi)^T=(A_n^T)^{-1}\mathbf{v}.
\end{equation}
It is therefore clear that there are $\det(A_n)$ distinct eigenvalues of the odometer action which are associated to the $n$th stage of the construction of the odometer.  Denote the set of vectors $(\alpha,\xi)$ satisfying \eqref{eval} by $\mathcal E(A_n)$ and fix $n$. For ease of notation we suppress the subscript $n$ and let the reader determine from the context whether the eigenvalues are being represented as row or column vectors.   

Let $A=\begin{pmatrix} a & b\\ c & d\end{pmatrix}$.  If $b=c=0$ then it follows from (\ref{eval}) that there will be $ad$ eigenvalues of the form $(\frac{j}a,\frac{j'}{d})$, $j\in\{0,\dots,a-1\}$, $j'\in\{0,\dots,d-1\}$.   To determine $\mathcal E(A)$ in the case of non-diagonal matrices we use (\ref{eval1}) to write
\begin{equation}\label{genform}
(\alpha,\xi)=\bigg(\frac{du-cv}{\det A},\frac{-bu+av}{\det A}\bigg)
\end{equation}
for some $(u,v)\in\mathbb Z^2$.
If the rows of $A$ have relatively prime entries, i.e. $gcd(a,b)=gcd(c,d)=1$, then the numerator of the first component can be made to take any value $0,\dots,\det A-1$, taken modulo $\det A$.   Therefore  there are exactly $\det A$ choices for $\alpha$, each with a corresponding $\xi$.  But there are $\det A$ pairs possible, so this exhausts all possible pairs that can occur, meaning all possible $\xi$ have now appeared.
So given an $\alpha$ there can only be a unique $\xi$ such that $(\alpha,\xi)$ is an eigenvalue.

Notice in the above two cases, for any $\alpha$ that appears in the spectrum of the odometer, the number of $\xi$ that can pair with it is the same for any $\alpha$.  It will follow from our work below that this holds in general.   In particular, given any matrix $A$ we can decompose it into the product of a diagonal matrix with a matrix whose rows have relatively prime entries:
\begin{equation*}
A=\begin{pmatrix} a & b \\ c & d\end{pmatrix}=\Delta\cdot\hat A=\begin{pmatrix} gcd(a,b) & 0\\ 0 & gcd(c,d)\end{pmatrix}\begin{pmatrix}\hat a & \hat b \\ \hat c & \hat d
\end{pmatrix}.
\end{equation*}
The set $\mathcal E(A)$ 
can be described in a nice geometric fashion using the matrix $\hat A$.  Using \eqref{eval} we have
$\mathcal E(A)=\Delta^{-1}(\hat A^T)^{-1}\left(\mathbb Z^2/(\Delta\hat A\mathbb)^T \mathbb Z^2\right)$.
On the other hand,
\begin{equation*}
\mathbb Z^2/(\Delta\hat A)^T\mathbb Z^2=\bigcup_{{0\le j<gcd(a,b)}\atop{0\le j'<gcd(c,d)}}\mathbb Z^2/\hat A^T\mathbb Z^2+\hat A^T\begin{pmatrix}j\\j'\end{pmatrix}
\end{equation*}
and therefore taking addition modulo $\mathbb Z^2$
\begin{align*}
\mathcal E(A)&=\bigcup_{{0\le j<gcd(a,b)}\atop{0\le j'<gcd(c,d)}}  \Delta^{-1}(\hat A^T)^{-1}\left(\mathbb Z^2/\hat A^T\mathbb Z^2\right)+\Delta^{-1}\begin{pmatrix}j\\j'\end{pmatrix}
=\Delta^{-1}\left(\mathcal E (\hat A)\right)+\mathcal E(\Delta).
\end{align*}

So given any $\mathbb Z^2$ odometer, there is now a clear algorithm for listing the eigenvalues corresponding to a fixed stage $n$.    Using \eqref{genform} first find $(u,v)\in\mathbb Z^2$ such that
$\hd u -\hc v=1$ and let $\hat \ell=-\hb u+\ha v$.
The following is a complete list (without repetition) of all $\det A$ eigenvalues of the $\mathbb Z^2$ action from this stage:
\begin{equation*}
\bigcup_{{0\le q<\det\hat A}\atop{{0\le j<gcd(a,b)}\atop{0\le j'<gcd(c,d)}}}\bigg(\frac{q+j\det\hat A}{gcd(a,b)\det\hat A},\frac{q\hat \ell+j'\det\hat A}{gcd(c,d)\det\hat A} \bigg).
\end{equation*}
Note that the since $A_n\mathbb Z^2<A_k\mathbb Z^2$ for all $k<n$, this collection will include all previous stage eigenvalues as well.

\subsection{Spectral analysis of Heisenberg odometers}
The main result of this section characterizes which finite dimensional irreducible representations of the Heisenberg group can arise in the spectrum of a given Heisenberg odometer.
Using standard inducing techniques from representation theory, Indukaev has shown in \cite{Induk} that for a given dimension $p$, 
all $p$ dimensional, irreducible representations of the discrete Heisenberg group $H$ can be parametrized by three scalars 
$(\alpha,\xi,\eta)\in \mathbb T^3$ and have the following structure:
\begin{equation}\label{rep}
U_{(x,y,z)}:\epsilon_j\mapsto e^{2\pi i(y\xi+(z+jy)\eta+[\frac{x+j}p]\alpha)}\epsilon_{(j-x)\!\!\!\!\mod p}
\end{equation}
where $(x,y,z)\in H$, $\eta\in\mathbb T^1$ is an irreducible fraction of the form $\frac{\ell}p$, $(\alpha,\xi)\in\mathbb T^2$ is arbitrary, $\epsilon_j$ denotes the projection onto the $j$th coordinate in a finite $p$-dimensional space, $j=0,\cdots,p-1$, and for $x\in\mathbb R$, $[x]$ denotes the integer part of $x$.  Conversely, given any such triple $(\alpha,\xi,\frac{\ell}p)$, the operator defined by \eqref{rep} gives a $p$ dimensional, irreducible representation of $H$.  
In what follows, for ease of notation, we say that $(\alpha,\xi,\frac{\ell}p)$ lies in the spectrum of an $H$ odometer action if the corresponding $p$ dimensional, irreducible representation of $H$ occurs in its spectral decomposition.

\begin{thm}
Let $\{\Gamma_n\}$, defined by $\{(A_n,m_n,i_n)\}$, be a sequence of normal subgroups of $H$ giving rise to a Heisenberg odometer action. 
Then $(\alpha,\xi,\frac{\ell}p)\in \mathbb T^3$, with $gcd(\ell,p)=1$, lies in the spectrum of the odometer action on $\iH$ if and only if there exists $n$ such that
$p$ divides $m_n$  and for all $(x,y)\in A_n\mathbb Z^2$
\begin{equation}\label{eq:want}
y\xi+\frac{i_n(x,y)\ell}p+\frac{x}{p}\alpha\in\mathbb Z.
\end{equation}
\end{thm}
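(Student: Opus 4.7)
The plan is to combine Theorem~\ref{discrete} with the explicit formula \eqref{rep} for the irreducible representations of $H$. Theorem~\ref{discrete} shows that the spectral decomposition of $U$ on $L^2(\iH)$ is assembled in a nested fashion from the decompositions of the regular representations of the finite quotients $H/\Gamma_n$. Since the regular representation of a finite group contains each of its irreducible representations, $U_{(\alpha,\xi,\ell/p)}$ will lie in the spectrum of $U$ if and only if it descends to an irreducible representation of $H/\Gamma_n$ for some $n$, equivalently if and only if $U_\gamma = \mathrm{Id}$ for every $\gamma \in \Gamma_n$. So the task reduces to translating this vanishing condition into the data $(A_n, m_n, i_n)$.

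For the translation, I would fix $n$ and $\gamma = (x,y,z) \in \Gamma_n$. Formula \eqref{rep} exhibits $U_\gamma$ as a monomial operator on the basis $\epsilon_0, \dots, \epsilon_{p-1}$, so $U_\gamma = \mathrm{Id}$ splits into two conditions: the index permutation $j \mapsto (j-x) \bmod p$ must be trivial, and the scalar phases must all equal $1$. The first condition is simply $p \mid x$. Since $\Gamma_n$ is normal, Proposition~\ref{prop:normal} says $m_n$ divides every entry of $A_n$, so the hypothesis $p \mid m_n$ automatically yields $p \mid x$ and $p \mid y$ for every $(x,y) \in A_n\mathbb{Z}^2$. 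This is exactly what is needed so that the floor term in \eqref{rep} reduces to $\lfloor (x+j)/p \rfloor = x/p$ independently of $j$, collapsing the phase requirement to $y\xi + (z+jy)(\ell/p) + (x/p)\alpha \in \mathbb{Z}$ for all $j = 0,\dots,p-1$.

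Taking consecutive differences in $j$ then gives $y\ell/p \in \mathbb{Z}$, which is automatic from $p \mid y$ and $\gcd(\ell,p) = 1$. What remains is that for every $(x,y) \in A_n\mathbb{Z}^2$ and every permitted $z = i_n(x,y) + km_n$ one has $y\xi + z(\ell/p) + (x/p)\alpha \in \mathbb{Z}$. Varying $k$ forces $m_n\ell/p \in \mathbb{Z}$, i.e.\ $p \mid m_n$, recovering one of the hypotheses; with this in place, the case $k=0$ is precisely the condition \eqref{eq:want}. Reading the argument in both directions then completes the proof.

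The main source of friction will be the piecewise behavior of $\lfloor (x+j)/p \rfloor$ when $x \not\equiv 0 \bmod p$. The normality of each $\Gamma_n$, via the divisibility of the entries of $A_n$ by $m_n$ from Proposition~\ref{prop:normal}, is precisely what dispatches this issue at the outset, since the single divisibility $p \mid m_n$ then propagates to $p \mid x$ throughout $A_n\mathbb{Z}^2$ and renders the floor term constant in $j$.
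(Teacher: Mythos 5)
Your proposal is correct and follows essentially the same route as the paper's proof: invoke Theorem~\ref{discrete} to reduce membership in the spectrum to the representation \eqref{rep} being trivial on some $\Gamma_n$, use Proposition~\ref{prop:normal} (normality forces $m_n$ to divide the entries of $A_n$) to collapse the floor term and the $jy\ell/p$ term, and extract $p\mid m_n$ from the fiber $(0,0,m_n)\in\Gamma_n$ before reducing the phase condition to \eqref{eq:want}. Your explicit observation that the permutation $j\mapsto (j-x)\bmod p$ must also be trivial is a point the paper leaves implicit, but it is handled by the same divisibility and does not change the argument.
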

\begin{proof}
Fix a Heisenberg odometer action on $\iH$ and let $(A_n,m_n,i_n)$ denote the triples associated with the subgroups $\Gamma_n$. 
Suppose that $(\alpha,\xi,\frac{\ell}p)$ is in the spectrum of $\iH$.  
Several easy observations follow from our proof of Theorem~\ref{discrete}.  First, since there is an inductive structure to the spectral decomposition of $L^2(\iH,\mu)$, there exists $n$ such that $(\alpha,\xi,\frac{\ell}{p})$ arises in the spectral decomposition of $L^2(H/\Gamma_n)$.  It also follows from the proof that the representations that occur in this decomposition are exactly the regular representations of the subgroup $H/\Gamma_n$.    Thus, for all $(x,y,z)\in\Gamma_n$, the representation given in \eqref{rep} with this triple reduces to the identity operator on $\Gamma_n$, meaning that we have
\begin{equation}\label{eq:inv}
y\xi+(z+jy)\frac{\ell}p+\bigg[\frac{x+j}p\bigg]\alpha\in\mathbb Z\qquad\text{for $j=0,\cdots,p-1$}.
\end{equation}
Since $(0,0,m_n)\in\Gamma_n$ it follows immediately that $p$ divides $m_n$.  Recall from Proposition~\ref{prop:normal} that it is necessary that $m_n$ divides all the entries of $A_n$ so $[\frac{x+j}{p}]=\frac{x}{p}$ and
 $\frac{jy}p\in\mathbb Z$.  Thus for $(x,y,z)\in\Gamma_n$  \eqref{eq:inv} reduces to  requiring
\begin{equation}\label{interm}
y\xi+\frac{z\ell}p+\frac{x}{p}\alpha=y\xi+\frac{(sm_n+i_n(x,y))\ell}{p}+\frac{x}{p}\alpha\in\mathbb Z
\end{equation}
for all $s\in\mathbb Z$.  The fact that $p$ is a divisor of $m_n$ now yields \eqref{eq:want}.

Suppose now that $(\alpha,\xi,\frac{\ell}p)$ is such that there exists $n$ for which $p$ divides $m_n$ and \eqref{eq:want} holds.  Using \eqref{interm} it follows immediately that \eqref{eq:inv} holds and thus the representation given in \eqref{rep} is an irreducible representation of $H/\Gamma_n$, therefore it must occur in the regular representation of $H/\Gamma_n$ (\cite{DF}).  It now follows from our proof of Theorem~\ref{discrete} that $(\alpha,\xi,\frac{\ell}p)$ occurs in the spectrum of $\iH$.
\end{proof}
\begin{rem}
If $p=1$, then the only non-zero irreducible fraction of the form $\frac{\ell}p$ comes from $\ell=0$ so in \eqref{eq:inv} we have $j=0$ and since $z\in\mathbb Z$ the one dimensional representations that occur have the form
 $e^{2\pi i(y\xi+ x\alpha)}$.
Thus, as we noted before, we see that the one dimensional representations that occur in the decomposition are precisely those that come from pairs $(\alpha,\xi)$ that are eigenvalues of the associated $\mathbb Z^2$ odometer.
\end{rem}

\subsection{Examples}
In the case of a flat odometer, without loss of generality we can assume $i_n(x,y)=0$ for all $n$, and we can conclude that the triples that occur are exactly those $(\alpha,\xi,\frac{\ell}{p})$ where $(\frac{\alpha}p,\xi)$ is an eigenvalue of the associated $\mathbb Z^2$ odometer.
Otherwise the values of $\xi$ and $\alpha$ depend on the structure of the functions $i_n$.  
Below we compute some examples to show 
the effect of these functions $i_n$ on the spectrum of the odometer. 

Consider the sequence of matrices $A_n=\begin{pmatrix} k_n & 0\\ 0 & k_n\end{pmatrix}$ and $m_n=k_n$ where the sequence $k_n$ as defined in Example~\ref{ex:kn}.  For any choice of the sequence $i_n$ for which the triple $(A_n,m_n,i_n)$ defines a Heisenberg odometer, the associated $\mathbb Z^2$ odometer has eigenvalues of the form $(\frac{j}{k_n},\frac{j'}{k_n})$ for $j,j'\in\{0,\cdots,k_n-1\}$.
Fix a stage $n$, assume $p$ divides $k_n$ and that $\ell$ is relatively prime to $p$. We consider three different choices for the sequence $i_n$.
\begin{enumerate}[(a)]
\item Let $i_n=0$.  The $p$-dimensional representations that occur will be  those corresponding to the triples
$\displaystyle{\bigg(\frac{pj}{k_n},\frac{j'}{k_n},\frac\ell{p}\bigg)}$.\\
\item Let $i_n=\displaystyle\frac{x}{k_n}$.  This gives rise to the odometer constructed in Example~\ref{ex:kn} where we showed that it is not flat so we expect a different set of representations to occur.  Indeed, the triples
$(\alpha,\xi,\frac{\ell}p)$ that can occur now have to satisfy:
\begin{align*}
y\xi+\frac{i(x,y)\ell}{p}+\frac{x}{p}\alpha&=y\xi+\frac{x\ell}{pk_n}+\frac{x}p\alpha=y\xi+x\bigg(\frac{\ell+k_n\alpha}{pk_n}\bigg)\in\mathbb Z.
\end{align*}
Therefore the representations that occur will be those corresponding to the triples
$\displaystyle{\bigg(\frac{jp-\ell}{k_n},\frac{j'}{k_n},\frac{\ell}{p}\bigg)}$.\\
\item Let $i_n=\displaystyle\frac{y}{k_n}$.  An argument parallel to that given in Example~\ref{ex:kn} shows that this choice of $i_n$ gives rise to an odometer action which is not flat.   The triples that occur must satisfy 
\begin{equation*}
y\xi+\frac{y\ell}{k_np}+\frac{x}{p}\alpha=y\bigg(\xi+\frac{\ell}{k_np}\bigg)+x\frac{\alpha}p\in\mathbb Z
\end{equation*}
which gives us triples of the form
$\displaystyle{\bigg(\frac{pj}{k_n},\frac{j'p-\ell}{k_np},\frac{\ell}{p}\bigg)}$.
Thus, this last example is not flat and also not conjugate to the odometer from Example~\ref{ex:kn}.
\end{enumerate}
\section{Acknowledgements}  The authors would like to thank Vitaly Bergelson, Jeff Bergen, Tomasz Downarowicz, Len Krop, Sasha Leibman, and E. Arthur Robinson for valuable conversations and in particular for bringing to our attention reference \cite{Induk}.  The first author's research was partially supported by a Western Connecticut State University sabbatical leave.  Both the second and third author's research was partially supported by a DePaul University College of Liberal Arts and Sciences Summer Research Grant.  The third author's research was also partially supported by NSF grant DMS-0703421.  Finally, the first author would like to thank DePaul University for its hospitality and support for the duration of this project.

\begin{bibdiv}

\begin{biblist}

\bib{Avni}{article}{
   author={Avni, Nir},
   title={Spectral and mixing properties of actions of amenable groups},
   journal={Electron. Res. Announc. Amer. Math. Soc.},
   volume={11},
   date={2005},
   pages={57--63 (electronic)},
   issn={1079-6762},
   review={\MR{2150945 (2006a:37003)}},
}

\bib{Co06}{article}{
   author={Cortez, Mar{\'{\i}}a Isabel},
   title={${\mathbb Z}^d$ Toeplitz arrays},
   journal={Discrete Contin. Dyn. Syst.},
   volume={15},
   date={2006},
   number={3},
   pages={859--881},
   issn={1078-0947},
   review={\MR{2220753 (2007b:37020)}},
}

\bib{CP08}{article}{
   author={Cortez, Mar{\'{\i}}a Isabel},
   author={Petite, Samuel},
   title={$G$-odometers and their almost one-to-one extensions},
   journal={J. Lond. Math. Soc. (2)},
   volume={78},
   date={2008},
   number={1},
   pages={1--20},
   issn={0024-6107},
   review={\MR{2427048 (2010c:37035)}},
}

\bib{D05}{article}{
   author={Downarowicz, Tomasz},
   title={Survey of odometers and Toeplitz flows},
   conference={
      title={Algebraic and topological dynamics},
   },
   book={
      series={Contemp. Math.},
      volume={385},
      publisher={Amer. Math. Soc.},
      place={Providence, RI},
   },
   date={2005},
   pages={7--37},
   review={\MR{2180227 (2006f:37009)}},
}

\bib{DF}{book}{
   author={Dummit, David S.},
   author={Foote, Richard M.},
   title={Abstract algebra},
   edition={3},
   publisher={John Wiley \& Sons Inc.},
   place={Hoboken, NJ},
   date={2004},
   pages={xii+932},
   isbn={0-471-43334-9},
   review={\MR{2286236 (2007h:00003)}},
}

\bib{DK}{article}{
    AUTHOR = {Deninger, Christopher},
    author = {Schmidt, Klaus},
     TITLE = {Expansive algebraic actions of discrete residually finite
              amenable groups and their entropy},
   JOURNAL = {Ergodic Theory Dynam. Systems},
  FJOURNAL = {Ergodic Theory and Dynamical Systems},
    VOLUME = {27},
      YEAR = {2007},
    NUMBER = {3},
     PAGES = {769--786},
      ISSN = {0143-3857},
   MRCLASS = {37A35 (37B40)},
  review={\MR{2322178 (2008d:37009)}},
MRREVIEWER = {A. I. Danilenko},
}

\bib{ER}{article}{
    AUTHOR = {Einsiedler, Manfred},
    author = { Rindler, Harald},
     TITLE = {Algebraic actions of the discrete {H}eisenberg group and other
              non-abelian groups},
   JOURNAL = {Aequationes Math.},
    VOLUME = {62},
      YEAR = {2001},
    NUMBER = {1-2},
     PAGES = {117--135},
      ISSN = {0001-9054},
     CODEN = {AEMABN},
   MRCLASS = {37A15 (22D99 28A15 37A35)},
  review={\MR{1849144 (2002f:37005)}},
MRREVIEWER = {Mahesh Nerurkar},
}

\bib{Induk}{article}{
   author={Indukaev, F. K.},
   title={The twisted Burnside theory for the discrete Heisenberg group and
   for the wreath products of some groups},
   language={Russian},
   journal={Vestnik Moskov. Univ. Ser. I Mat. Mekh.},
   date={2007},
   number={6},
   pages={9--17, 71},
   issn={0201-7385},
   translation={
      journal={Moscow Univ. Math. Bull.},
      volume={62},
      date={2007},
      number={6},
      pages={219--227},
      issn={0027-1322},
   },
   review={\MR{2518473 (2010i:20006)}},
}

\bib{M64}{article}{
   author={Mackey, George W.},
   title={Ergodic transformation groups with a pure point spectrum},
   journal={Illinois J. Math.},
   volume={8},
   date={1964},
   pages={593--600},
   issn={0019-2082},
   review={\MR{0172961 (30 \#3176)}},
}

\bib{N98}{book}{
   author={Nadkarni, M. G.},
   title={Spectral theory of dynamical systems},
   series={Birkh\"auser Advanced Texts: Basler Lehrb\"ucher. [Birkh\"auser
   Advanced Texts: Basel Textbooks]},
   publisher={Birkh\"auser Verlag},
   place={Basel},
   date={1998},
   pages={x+182},
   isbn={3-7643-5817-3},
   review={\MR{1719722 (2001d:37001)}},
}

\bib{OW}{article}{
   author={Ornstein, Donald S.},
   author={Weiss, Benjamin},
   title={Entropy and isomorphism theorems for actions of amenable groups},
   journal={J. Analyse Math.},
   volume={48},
   date={1987},
   pages={1--141},
   issn={0021-7670},
   review={\MR{910005 (88j:28014)}},
}

\end{biblist}

\end{bibdiv}

\end{document}